\newcommand{\Grad}{{\rm Grad\,}}
\newcommand{\Om}{\Omega}
\newcommand{\om}{\omega}
\newcommand{\pa}{\partial}
\newcommand{\ov}{\overline}
\newcommand{\I}{{\rm Im}}
\newcommand{\Rt}{{\rm Re}}
\newcommand{\dive}{{\rm div\,}}
\newcommand{\wid}{\widetilde}
\newcommand{\na}{\nabla}
\newcommand{\mat}{\mathbb}
\newcommand{\R}{{\mat R}}
\newcommand{\N}{{\mat N}}
\newcommand{\Sp}{{\mat S}}
\newcommand{\ds}{\displaystyle}
\newcommand{\be}{\begin{eqnarray}}
\newcommand{\ben}{\begin{eqnarray*}}
\newcommand{\en}{\end{eqnarray}}
\newcommand{\enn}{\end{eqnarray*}}
\newtheorem{lem}[theorem]{Lemma}
\newtheorem{assumption}[theorem]{Assumption}
\newtheorem{remark}[theorem]{Remark}
\begin{document}
\renewcommand{\theequation}{\arabic{section}.\arabic{equation}}

\title{\bf On uniqueness of inverse conductive scattering problem with unknown embedded obstacles}
\author{Chengyu Wu\thanks{School of Mathematics and Statistics, Xi'an Jiaotong University,
Xi'an 710049, Shaanxi, China ({\tt wucy99@stu.xjtu.edu.cn})}
\and
Jiaqing Yang\thanks{School of Mathematics and Statistics, Xi'an Jiaotong University,
Xi'an 710049, Shaanxi, China ({\tt jiaq.yang@xjtu.edu.cn})}
}
\date{}
\maketitle


\begin{abstract}
 This paper is concerning the inverse conductive scattering of acoustic waves by a bounded inhomogeneous object with possibly embedded obstacles inside. A new uniqueness theorem is proved that the conductive object is uniquely determined by the fixed frequency far-field measurements,  ignoring its contents. Meanwhile, the boundary informations of several related physical coefficients are also uniquely determined. The proof is mainly based on a detailed singularity analysis of solutions near the interface associated with a family of point sources or hypersingular point sources, which is deduced by the potential theory. Moreover, the other key ingredient in the proof is the well-posedness of the interior transmission problem with the conductivity boundary condition in the $L^2$ sense, where several sufficient conditions depending on the domain and physical coefficients are provided.
\end{abstract}

\begin{keywords}
inverse scattering, conductive boundary, singularity analysis, interior transmission problem, uniqueness. 
\end{keywords}

\begin{AMS}
35R30. 
\end{AMS}

\pagestyle{myheadings}
\thispagestyle{plain}
\markboth{C. Wu and J. Yang}{Uniqueness in inverse acoustic conductive scattering}

\section{Introduction}\label{sec1}
\setcounter{equation}{0}
In this paper, we consider the inverse conductive scattering problem of determining penetrable object with possibly embedded obstacles. Such kind of problems can find applications in various fields, such as remote sensing, radar and sonar, medical imaging and nondestructive testing. 

Mathematically, denote by $D$ and $D_b$ the bounded penetrable object in $\R^3$ and the embedded obstacles, respectively, where we assume that $D,D_b\subset\R^3$ are both bounded domains such that $\pa D\in C^{2,\alpha_0}$ and $D_b\subset\subset D$, $D_b=\bigcup_{j=1}^m D_b^{(j)}$ with $D_b^{(j_1)}\cap D_b^{(j_2)}=\emptyset$ if $j_1\neq j_2$. Denote by $n$ the refractive index in the inhomogeneous medium $\R^3\setminus\overline{D_b}$, which is assumed that $n=1$ in $\R^3\setminus\overline{D}$, $n\in L^\infty(D\setminus\overline{D_b})$, $\Rt(n)>0$ and $\I(n)\geq0$ in $D\setminus\overline{D_b}$. Consider an incident wave $u^i$ satisfying the Helmholtz equation $\Delta u^i+k^2 u^i=0$ in $\R^3$ with the wave number $k>0$ given by $k=\om/c$, where $\om>0$ is the frequency and $c>0$ is the sound speed. Then the scattering problem corresponding to the scatterer $(D,\lambda,\gamma,n,D_b)$ is modeled by the following transmission problem
\be\label{1.1}
  \left\{
  \begin{array}{ll}
    \Delta u+k^2u=0~~~&{\rm in}~\R^3\setminus\ov D,\\
    \dive(\lambda\nabla v)+k^2nv=0~~~&{\rm in}~D\setminus\overline{D_b},\\ 
    u-v=0~~~&{\rm on}~\pa D,\\ 
    \pa_\nu u=\lambda\pa_\nu v+\gamma v~~~&{\rm on}~\pa D,\\ 
    \mathscr{B}(v)=0~~~&{\rm on}~\pa D_b,\\ 
    \displaystyle\lim\limits_{r\rightarrow\infty}r\left(\pa_r u^s-iku^s\right)=0,~&r=|x|, 
  \end{array}
  \right.
\en
where $\lambda\in C^1(\ov D\setminus D_b)$ with $\lambda>0$ in $\ov D\setminus D_b$ is the transmission coefficient related to the medium $\R^3\setminus\overline{D_b}$, $\gamma\in C(\pa D)$ is the conductive coefficient with $\I(\gamma)\leq0$, $u^s$ is the scattered wave, $u=u^i+u^s$ denotes the total field in $\R^3\setminus\ov D$, and $\nu$ is the unit exterior normal to $\pa D$. Furthermore, $\mathscr{B}$ stands for the boundary condition imposed on $\pa D_b$ satisfying $\mathscr{B}(v)=v$, provided that  $D_b$ is a sound-soft obstacle, and $\mathscr{B}(v)=\pa_{\nu}v$ with $\nu$ the unit exterior normal to $\pa D_b$, provided that $D_b$ is a sound-hard obstacle. Moreover, if $D_b$ is a mixed-type obstacle then $\mathscr{B}(v)=\pa_\nu v+\beta v$ on an open set $\Gamma_b\subset\pa D_b$(with the impedance coefficient $\beta$ such that $\I(\beta)\geq0$) and $\mathscr{B}(v)=v$ on $\pa D_b\setminus\ov\Gamma_b$.

The Sommerfeld radiation condition leads to the following asymptotic behavior 
\ben
  u^s(x)=\frac{e^{ikr}}{4\pi r}\left\{u_{\infty}(\hat{x})+O\left(\frac{1}{r}\right)\right\},~~~~r=|x|\rightarrow\infty,
\enn
for the scattered field $u^s$ uniformly in all directions $\hat{x}=x/r\in\Sp^2$, where $u_\infty$ is the far-field pattern defined on the unit sphere. The well-posedness has been extensively studied in the literature, and we refer to e.g.  \cite{FD06,DR13,AL98,XB10,JBH18}. In this paper, we study the inverse problem to determine $D,\lambda,n,\gamma$ and $D_b$ from a knowledge of the far-field pattern at a fixed frequency. 

Many uniqueness theorems are available in the literature for the case of $\lambda\in\R^+$, $\gamma=0$ and $D_b=\emptyset$. The first result is initiated by Isakov \cite{VI90} with the variational method by constructing special singular solutions for two groups of Helmholtz equations. Then Kirsch \& Kress \cite{AR93} simplified Isakov's approach by utilizing the potential theory in the classical continuous function spaces. 
From then on, lots of inverse problems for different mathematical models have been investigated (see, e.g., \cite{DM98,FH94,TR96,VI08,XB10,NV04,JB11,FH96,DM98,PH93,FD03,XBJ10} and the references therein). However, it is noted that among these work, there are rare that can handle the case $D_b\neq\emptyset$ with no other strong assumptions, and their methods are usually complicated. For example, the penetrable obstacle $D$ and the embedded object $D_b$ can be determined simultaneously in \cite{XB10} but under the condition that $n$ is a known constant in $D\setminus\ov D_b$. In \cite{PH98}, H\"{a}hner get the uniqueness of $D$ and $D_b$ by considering the completeness of the products of the harmonic functions, which is, however, with the knowledge of the scattered waves corresponding to an interval of wave numbers. It should also be reminded that the majority of the above uniqueness results were under the hypothesis $\lambda\neq1$ near the boundary $\pa D$. 

When focusing on the conductive boundary condition, that is, $\gamma\neq0$, the related work are much less and almost all are only considering constant $\lambda$. In \cite{FH94}, also based on Isakov's idea, Hettlich obtained the uniqueness of $D,n,\lambda$ and $\gamma|_{\pa D}$, under the assumptions that $n,\lambda$ are constant in $D$ and $D_b=\emptyset$ with other certain conditions. For the case $\lambda=1$, Hettlich gave an additional assumption that $\gamma$ has a positive distance from 0 on $\pa D$ in order to get the uniqueness, which is not necessary in our results. Later, Gerlach \& Kress \cite{TR96} applied the same approach in \cite{AR93} to shorten the analysis of Hettlich. For the case $D_b\neq\emptyset$ or $n,\lambda$ are not constant, there is barely no scientific literature. We here only mention that Mitrea and Mitrea \cite{DM98} get the uniqueness of $\pa D$ with possibly $D_b\neq\emptyset$ but also with constant $n$ and $\lambda$, whose methods are based on boundary integral techniques and Calder\'{o}n-Zygmund theory. 

As discussed above, the singular solution for the scattering problem plays a key role in the inverse problem. Thus in the present paper we study in detail the solutions to problem \eqref{1.1} with point source or hypersingular point source incident waves. It is a natural thought that the solutions possess a same singularity as the singular incidence when the singular positions approach the interface. Here we give the first rigorous proof for such thought. We show that the solutions have the same singularity as the singular incidence multiplying by a constant relative to the transmission coefficient $\lambda$ and conductive coefficient $\gamma$. We establish these results by spilting the singular solution into several parts. Different methods have been applied to analyze different parts, such as the variational method and the integral equation method, and for the key part it requires an elaborate mathmatical analysis. The detailed singularity for singular solutions, besides the uniqueness results of the inverse problem, is another ingredient of this paper. 

For the inverse problem, it is well known that $D_b\neq\emptyset$ or $\lambda=1$ can bring great difficulties in the uniqueness proof of the inverse problem. But recently, a much simpler method for the inverse transmission problem is proposed in \cite{JBH18}, which can recover the boundary $\pa D$ in situations that $D_b\neq\emptyset$ or $\lambda=1$. This method is applied in the present paper and extended to many other cases, like in \cite{FJB17} for the inverse fluid-solid interaction problem and in \cite{JY21} for the inverse boundary value problem. It is expected to be used in more cases. Our method is mainly based on constructing a well-posed modified interior transmission problem in a sufficiently small domain inside $D$ near the boundary $\pa D$. We use the smallness of the domain to eliminate the difficulties generated by $D_b\neq\emptyset$ or $\lambda=1$ near the boundary $\pa D$. Further, our uniqueness results contain the identification of the derivatives of the coefficients on the boundary. The proof is greatly relied on the aforementioned complete description of the singularity of the solutions to the scattering problem with singular incidence. We note that these estimates are possible to be generalized to determine the higher order derivatives of the coefficients. And it is also possible to extend our methods to other types inverse scattering problems, such as the electromagnetic and elastic scattering. 

The paper is organized as follows. In Section \ref{sec2}, we propose a novel perspective to derive the singularity of solutions with point source or hypersingular point source incident waves in the $H^1$ sense, which plays a key role in the uniqueness proof of the inverse problem. In Section \ref{sec3.1}, we introduce two types interior transmission problem corresponding to the case $\lambda\neq1$ and $\lambda=1$ near the boundary, respectively. The new type interior transmission problem for the case $\lambda=1$ is then investigated carefully and several sufficient conditions for it to be well-posed are given. In particular, we find that this interior transmission problem is always uniquely solvable in domain sufficiently small,  which is essential for later proof. Finally, Section \ref{sec3.2} is all about the proof of the uniqueness result. A novel and simple method is employed here for the unique determination of the penetrable object from the measurement of the far-field pattern at a fixed frequency, disregarding its contents. Furthermore, we also identify the coefficients in the scattering problem at the boundary of the penetrable object utilizing the singularity of the solutions.

\section{Singularity analysis}\label{sec2}
\setcounter{equation}{0}
This section is devoted to the study of the singularity of the solutions to problem \eqref{1.1}, as the source position approaching the interface. It will be proved that the solutions admit the same singularity to the singular incidence multiplying by certain constants related to $\lambda$ and $\gamma$. These singularities play a key role in proving the uniqueness results of the inverse problem and are also interesting on their own rights. 

For the sake of simplicity, we just consider the case with an impedance condition on $\pa D_b$, which is $\mathscr{B}(v)=\pa_\nu v+i\rho v$ with $\rho\geq0$. Other cases can be handled analogously. We start by presenting some useful notations. 

For $x_0\in\pa D$ and $\delta>0$ sufficiently small, define $x_j:=x_0+(\delta/j)\nu(x_0)\in\R^3\setminus\ov D$ and $y_j:=x_0-(\delta/j)\nu(x_0)\in D\setminus\ov D_b$, $j\in\N$. Now we introduce the single- and double-layer boundary operators
\ben
&&(S_{ee}\varphi)(x)=\int_{\pa D}\Phi(x,y)\varphi(y)ds(y),~~~x\in\pa D, \\
&&(K_{ee}\varphi)(x)=\int_{\pa D}\frac{\pa\Phi(x,y)}{\pa\nu(y)}\varphi(y)ds(y),~~~x\in\pa D,
\enn
and their normal derivative operators
\ben
&&(K'_{ee}\varphi)(x)=\int_{\pa D}\frac{\pa\Phi(x,y)}{\pa\nu(x)}\varphi(y)ds(y),~~~x\in\pa D,\\
&&(T_{ee}\varphi)(x)=\frac{\pa}{\pa\nu(x)}\int_{\pa D}\frac{\pa\Phi(x,y)}{\pa\nu(y)}\varphi(y)ds(y),~~~x\in\pa D.
\enn
where $\Phi(x,y)=\exp(ik|x-y|)/(4\pi|x-y|)$ is the fundamental solution to the Helmholtz equation in $\mathbb{R}^3$. We also introduce the boundary operators $S_{ii}$, $K_{ii}$, $K'_{ii}$ and $T_{ii}$ defined on $\pa D_b$ as well as $S_{th}$, $K_{th}$, $K'_{th}$ and $T_{th}$ with $t,h=e,i$, respectively, where, for example, $S_{ei}$ is defined similarly as $S_{ee}$ but with $x\in\pa D_b$. It is known that $S_{tt}$, $K_{tt}$ and $K'_{tt}$ with $t=e,i$ are all bounded and compact in $L^q(\pa D)(1<q<\infty)$. We refer the readers to \cite{RP01,RP98} for the properties of these operators in $L^p$ spaces. 
\begin{theorem}\label{thm2.8}
	Denote by $(u_j^s,v_j)$ the solution for problem {\rm (\ref{1.1})} with the incident wave $u_j^i(x)=\Phi(x,x_j)$, $j\in\N$. Then, 
	\ben
	\left\|v_j-\frac{2}{\lambda(x_0)+1}\Phi_0(x,x_j)\right\|_{H^{1}(D\setminus\ov D_b)}\leq C
	\enn
	uniformly for $j\in\N$ with $C>0$ a constant, where $\Phi_0(x,y)=1/(4\pi|x-y|)$ is the fundamental solution for the Laplacian. 
\end{theorem}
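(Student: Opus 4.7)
The natural strategy is to set $w_j := v_j - T_0\Phi_0(\cdot,x_j)$ with $T_0 := 2/(\lambda(x_0)+1)$ and show that $w_j$ is uniformly bounded in $H^1(D\setminus\ov{D_b})$. The constant $T_0$ is dictated by the classical image-method for the flat-interface Laplace transmission problem with frozen coefficient $\lambda(x_0)$: if one replaces $\pa D$ by its tangent plane at $x_0$ and drops the $k^2$ and $\gamma$ terms, then the transmitted field inside $D$ generated by the source at $x_j$ is exactly $T_0\Phi_0(\cdot,x_j)$, while the reflected field outside is $R_0\Phi_0(\cdot,y_j)$ with $R_0=(1-\lambda(x_0))/(1+\lambda(x_0))$ and $y_j$ as in the paper (which is precisely the reflection of $x_j$ across the tangent plane at $x_0$). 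The algebraic identities $1+R_0=T_0$ and $1-R_0=\lambda(x_0)T_0$ encode the exact cancellation of the Dirichlet and Neumann transmission conditions on the tangent plane.

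To realize this rigorously, I would introduce the paired parametrices $V_j^-(x)=T_0\Phi_0(x,x_j)$ in $D\setminus\ov{D_b}$ and $V_j^+(x)=\Phi_0(x,x_j)+R_0\Phi_0(x,y_j)$ in $\R^3\setminus\ov D$, write $v_j=V_j^-+w_j$ and $u_j^s=R_0\Phi_0(\cdot,y_j)+\widetilde u_j$, and derive the transmission system satisfied by $(\widetilde u_j,w_j)$. Its data consists of: volume source terms $k^2 V_j^+$ outside and $(\nabla\lambda)\cdot\nabla V_j^- + k^2 n V_j^-$ inside (since $\Phi_0$ is harmonic away from its pole); residual jumps on $\pa D$ measuring the failure of exact flat-interface cancellation on the actual curved boundary with variable coefficients, together with the extra $-\gamma V_j^-$ contribution arising from the conductive condition; and a boundary trace on $\pa D_b$ coming from $V_j^-$, which is uniformly bounded in $j$ because $D_b\subset\subset D$ keeps $V_j^-$ uniformly smooth near $\pa D_b$. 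Following the roadmap sketched in the introduction, I would split $(\widetilde u_j,w_j)$ into several pieces, localize the principal parts with a cutoff $\chi$ supported in a small neighborhood of $x_0$, absorb the volume residues and the $\pa D_b$ data through the variational formulation of the transmission problem, and invert the $\pa D$ jumps using the boundary integral representation in terms of the operators $S_{ee},K_{ee},K'_{ee},T_{ee}$ (and their $\pa D_b$ analogues) whose $L^q$ continuity and Fredholm properties supply uniform a priori estimates.

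The principal obstacle, and the \emph{elaborate mathematical analysis} that the introduction alludes to, will be proving that the residual jumps on $\pa D$ are uniformly bounded in the appropriate trace norms, despite the individual blow-up of $\Phi_0(\cdot,x_j)$, $\Phi_0(\cdot,y_j)$ and their normal derivatives along $\pa D$ as $x_j\to x_0$. The required cancellations come from the identities $1+R_0=T_0$ and $1-R_0=\lambda(x_0)T_0$ combined with careful Taylor expansions of $|x-x_j|-|x-y_j|$ and of $\nu(x)-\nu(x_0)$ for $x\in\pa D$ close to $x_0$; the $C^{2,\alpha_0}$ regularity of $\pa D$ ensures that the boundary deviates from its tangent plane only by a quadratic term, providing the extra smoothness needed to absorb the dangerous $1/|x-x_0|^2$ singular terms in the normal derivatives. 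The $C^1$ regularity of $\lambda$ similarly controls the deviation from the frozen coefficient in the Neumann transmission relation, and the conductive perturbation $-\gamma V_j^-|_{\pa D}$ is of genuinely lower order because $V_j^-|_{\pa D}$ has only an integrable $1/|x-x_0|$ singularity in $L^p(\pa D)$ for $p<2$, uniformly in $j$.
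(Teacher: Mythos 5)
Your strategy is viable and it is a genuinely different route from the paper's. The paper does not subtract an explicit image-method ansatz on both sides: it first subtracts only the reflected Helmholtz source $\Phi(\cdot,y_j)$ in the exterior, solves the resulting \emph{frozen-coefficient} transmission problem (with $\lambda\equiv\lambda(x_0)$, $n\equiv 1$) by a system of boundary integral equations in $L^p(\pa D)\times L^p(\pa D)\times C(\pa D_b)$, $p<2$ (the boundary data $f_{1,j}$ is only $L^p$, so no $H^1$ variational treatment is possible at that stage), and the constant $2/(\lambda_0+1)$ \emph{emerges} from the leading diagonal of that Fredholm system; the interior singular part is then identified with $\tfrac{2}{\lambda_0+1}\Phi_0(\cdot,x_j)$ through surface-gradient estimates of $\Phi_0(\cdot,x_j)-\Phi_0(\cdot,y_j)$ and a harmonic-extension argument, and the variable-coefficient correction is handled last by a variational residual problem. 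You instead front-load the constant via the flat-interface reflection/transmission coefficients and aim to show the residual jumps are uniformly bounded in $H^{1/2}(\pa D)\times H^{-1/2}(\pa D)$, so that a single variational a priori estimate closes the argument; the identities $1+R_0=T_0$, $1-R_0=\lambda_0T_0$ do reduce the Dirichlet residual to $R_0[\Phi_0(\cdot,x_j)-\Phi_0(\cdot,y_j)]$ plus smooth terms and the Neumann residual to $R_0[\pa_\nu\Phi_0(\cdot,x_j)+\pa_\nu\Phi_0(\cdot,y_j)]$ plus terms of size $C/|x-x_0|$ coming from $\lambda-\lambda_0$ and $\gamma$, all of which can indeed be bounded uniformly (in $W^{1,p}(\pa D)\hookrightarrow H^{1/2}$, respectively $L^p(\pa D)\hookrightarrow H^{-1/2}$, $4/3<p<2$, plus the mirror-point lemma of Colton--Kress--Monk that the paper cites). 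So your route buys you a way around the $L^p$ boundary-integral machinery, at the price of having to carry out essentially the same mirror-point and surface Taylor estimates that constitute the paper's hard step, and of invoking the well-posedness of the full variable-coefficient conductive transmission problem with $H^{-1}$ volume data and $H^{1/2}\times H^{-1/2}$ jumps (which the paper also uses in its final step).

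Two repairs are needed before this becomes a proof. First, your exterior parametrix $R_0\Phi_0(\cdot,y_j)$ is built from the Laplace kernel: it violates the Sommerfeld radiation condition and generates the volume term $k^2R_0\Phi_0(\cdot,y_j)$, which is not square-integrable over the unbounded exterior; you should use the Helmholtz kernel $R_0\Phi(\cdot,y_j)$ outside (the difference $\Phi-\Phi_0$ only contributes uniformly smooth boundary terms), keeping $T_0\Phi_0(\cdot,x_j)$ inside as you propose. Second, the central claim --- uniform boundedness of the residual jumps in the trace norms --- is only asserted; to make it rigorous you must prove the bound $|\Grad(\Phi_0(x,x_j)-\Phi_0(x,y_j))|\le C/|x-x_0|$ on $\pa D$ (this is exactly where the $C^{2,\alpha_0}$ quadratic-deviation estimate enters) and invoke the uniform boundedness of $\pa_\nu\Phi(\cdot,x_j)+\pa_\nu\Phi(\cdot,y_j)$ in $C(\pa D)$; without these two estimates the asserted cancellation on the curved boundary does not follow from the flat-interface algebra alone.
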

\begin{proof}
	We prove by spilting the solutions $(u_j^s,v_j)$ into two parts. First, we consider the case with constant $\lambda$ and $n$. Define $(\hat u_j^s,\hat v_j)$ to be the unique solution of the following equations 
	\be\label{2.12}
	\left\{
	\begin{array}{ll}
		\Delta\hat u^s_j+k^2\hat u^s_j=0~~~&{\rm in}~\R^3\setminus\ov D,\\
		\dive(\lambda_0\nabla\hat v_j)+k^2\hat v_j=0~~~&{\rm in}~D\setminus\overline{D_b},\\
		\hat u^s_j-\hat v_j=f_{1,j}~~~&{\rm on}~\pa D,\\ 
	    \pa_\nu\hat u^s_j-\left(\lambda_0\pa_\nu\hat v_j+\gamma\hat v_j\right)=f_{2,j}~~~&{\rm on}~\pa D,\\ 
		\mathscr{B}(\hat v_j)=0~~~&{\rm on}~\pa D_b,\\
		\displaystyle\lim\limits_{r\rightarrow\infty}r\left(\pa_r\hat u^s_j-ik\hat u^s_j\right)=0,~&r=|x|, 
	\end{array}
	\right.
	\en
	where $\lambda_0=\lambda(x_0)$ and the boundary datas 
	\ben
	&&f_{1,j}(x)=-(\Phi(x,x_j)+\Phi(x,y_j)), \\ 
	&&f_{2,j}(x)=-\left(\frac{\pa\Phi(x,x_j)}{\pa\nu(x)}+\frac{\pa\Phi(x,y_j)}{\pa\nu(x)}\right).
	\enn
	Clearly, $(u_j^s-\Phi(\cdot,y_j),v_j)$ solves problem \eqref{2.12} replacing $\lambda_0$ and $k^2$ by $\lambda$ and $k^2n$. We will analyze $\hat v_j$ and $v_j-\hat v_j$ separately. 
	
	Before going further, we note that $f_{1,j}\in L^p(\pa D)$ with $1\leq p<2$ is uniformly bounded for $j\in\N$, and by \cite[Lemma 4.2]{DRP97} $f_{2,j}$ is uniformly bounded in $C(\pa D)$ for $j\in\N$. 
Set $k_1^2=k^2/\lambda_0>0$. Now let $(\hat u_j^s,\hat v_j)$ be in the form of 
  \be\label{2.15a}
    \hat u_j^s(x)\;&&=\int_{\pa D}\Phi(x,y)\varphi_j(y)ds(y)+\lambda_0\int_{\pa D}\frac{\pa\Phi(x,y)}{\pa\nu(y)}\psi_j(y)ds(y),~~x\in\R^3\setminus\ov D,\\
    \label{2.16a}\nonumber
    \hat v_j(x)\;&&=\int_{\pa D}\Phi_1(x,y)\varphi_j(y)ds(y)+\int_{\pa D}\frac{\pa\Phi_1(x,y)}{\pa\nu(y)}\psi_j(y)ds(y)\\
    \;&&\quad\;+\int_{\pa D_b}\Phi_1(x,y)\eta_j(y)ds(y),~~x\in D\setminus\ov D_b 
  \en
  with $\Phi_1(x,y)=\exp(ik_1|x-y|)/(4\pi|x-y|)$. Then by the jump relations of these operators, it is equivlently reduced to the following system of boundary integral equations
  \be\label{2.2}
    \left(
      \begin{array}{ccc}
        1 & 0 & 0 \\
        -h\gamma & 1 & 0 \\
        0 & 0 & 1 \\
      \end{array}
    \right)
    \left(
      \begin{array}{c}
        \psi_j \\
        \varphi_j \\
        \eta_j
      \end{array}
    \right)
    +L
    \left(
      \begin{array}{c}
        \psi_j \\
        \varphi_j \\
        \eta_j
      \end{array}
    \right)
    =
    \left(
      \begin{array}{c}
        2hf_{1,j} \\
        -2hf_{2,j} \\
        0
      \end{array}
    \right)
  \en
  in $L^p(\pa D)\times L^p(\pa D)\times C(\pa D_b)$ with $p<2$,
  where
  $
    L=(L_1,L_2,L_3)
  $
  with 
  \ben
    &&L_1=\left(2h(\lambda_0 K_{ee}-K_{ee}^{(1)}),2h\gamma K_{ee}^{(1)}-2h\lambda_0(T_{ee}-T_{ee}^{(1)}),-2(T_{ei}^{(1)}+i\rho K_{ei}^{(1)})\right)^T,\\
    &&L_2=\left(2h(S_{ee}-S_{ee}^{(1)}),2h(\lambda_0 K_{ee}^{'(1)}+\gamma S_{ee}^{(1)}-K'_{ee}),-2(K_{ei}^{'(1)}+i\rho S_{ei}^{(1)})\right)^T, \\
    &&L_3=\left(-2hS_{ie}^{(1)},2h(\lambda_0 K'_{ie}+\gamma S_{ie}^{(1)}),-2(K_{ii}^{'(1)}+i\rho S_{ii}^{(1)})\right)^T
  \enn
  and $h=1/(\lambda_0+1)$. The operators above $S_{th}^{(1)}$, $K_{th}^{(1)}$, $K_{th}^{'(1)}$ and $T_{th}^{(1)}$ with $t,h=e,i$ are defined similarly as $S_{th}$, $K_{th}$, $K'_{th}$ and $T_{th}$ with the kernel function $\Phi_1(x,y)$. Since the operators in $L$ are all compact in corresponding Banach spaces, we have that (\ref{2.2}) is of Fredholm type. Thus, the existence of solution $(\psi_j,\varphi_j,\eta_j)^T\in L^p(\pa D)\times L^p(\pa D)\times C(\pa D_b)$ to (\ref{2.2}) follows from the uniqueness of the direct problem \eqref{1.1} with the estimate that 
  \ben
    \|\psi_j\|_{L^p(\pa D)}+\|\varphi_j\|_{L^p(\pa D)}+\|\eta_j\|_{L^\infty(\pa D_b)}\leq C(\|f_{1,j}\|_{L^p(\pa D)}+\|f_{2,j}\|_{L^p(\pa D)})\leq  C. 
  \enn 

  Since $L^p(\pa D)$ is bounded embedded into $H^{-1/2}(\pa D)$ for $p>4/3$ in the two-dimensional case, by the first equation in (\ref{2.2}) and \cite[Corollary 3.7]{DR13}, we see that
  \be\label{2.19a}
    \left\|\psi_j-\frac{2}{\lambda_0+1}f_{1,j}\right\|_{H^{1/2}(\pa D)}\leq C.
  \en
  Moreover, applying \cite[Corollary 3.8]{DR13} we can derive from (\ref{2.16a}) and (\ref{2.19a}) that
  \ben
    &&\left\|\hat v_j-\int_{\pa D}\frac{\pa\Phi_1(x,y)}{\pa\nu(y)}\psi_j(y)ds(y)\right\|_{H^1(D\setminus\ov D_b)}\leq C, \\
    &&\left\|\int_{\pa D}\frac{\pa\Phi_1(x,y)}{\pa\nu(y)}\left(\psi_j(y)-\frac{2}{\lambda_0+1}f_{1,j}(y)\right)ds(y)\right\|_{H^1(D\setminus\ov D_b)}\leq C,
  \enn
  which implies
  \be\label{2.20a}
    \left\|\hat v_j-\frac{2}{\lambda_0+1}\int_{\pa D}\frac{\pa\Phi_1(x,y)}{\pa\nu(y)}f_{1,j}(y)ds(y)\right\|_{H^1(D\setminus\ov D_b)}\leq C
  \en
  uniformly for $j\in\N$.

  Clearly, we have 
  \be\label{2.21a}\nonumber
    &&\quad\int_{\pa D}\frac{\pa\Phi_1(x,y)}{\pa\nu(y)}f_{1,j}(y)ds(y)+2\int_{\pa D}\frac{\pa\Phi_0(x,y)}{\pa\nu(y)}\Phi_0(y,x_j)ds(y)\\ \nonumber
    &&=\int_{\pa D}\frac{\pa(\Phi_1-\Phi_0)(x,y)}{\pa\nu(y)}f_{1,j}(y)ds(y)+\int_{\pa D}\frac{\pa\Phi_0(x,y)}{\pa\nu(y)}(\Phi_0(y,x_j)-\Phi_0(y,y_j))ds(y)\\
    &&\quad\;+\int_{\pa D}\frac{\pa\Phi_0(x,y)}{\pa\nu(y)}\left(f_{1,j}(y)+\Phi_0(y,x_j)+\Phi_0(y,y_j)\right)ds(y).
  \en
  Note that $\Phi_0(\cdot,x_j)-\Phi_0(\cdot,y_j)\in C(\pa D)$ uniformly for $j\in\N$. Denote by $\Grad$ the surface gradient, consider 
  \be\label{2.22a}\nonumber
    &&\quad\Grad(\Phi_0(x,x_j)-\Phi_0(x,y_j))\\ \nonumber
    &&=\na_x[\Phi_0(x,x_j)-\Phi_0(x,y_j)]-(\nu(x)\cdot\na_x[\Phi_0(x,x_j)-\Phi_0(x,y_j)])\nu(x)\\ \nonumber
    &&=\na_x[\Phi_0(x,x_j)-\Phi_0(x,y_j)]-(\nu(x_0)\cdot\na_x[\Phi_0(x,x_j)-\Phi_0(x,y_j)])\nu(x_0)\\ \nonumber
    &&\quad-((\nu(x)-\nu(x_0))\cdot\na_x[\Phi_0(x,x_j)-\Phi_0(x,y_j)])\nu(x)\\
    &&\quad-(\nu(x_0)\cdot\na_x[\Phi_0(x,x_j)-\Phi_0(x,y_j)])(\nu(x)-\nu(x_0)).
  \en
  Direct calculation yields that 
  \ben
    |\na_x(\Phi_0(x,x_j)-\Phi_0(x,y_j))|\leq\frac{C}{|x-x_0|^2},~~~x\in\pa D.
  \enn
  Since $\pa D\in C^2$, we have $|\nu(x)-\nu(x_0)|\leq C|x-x_0|$ for all $x\in\pa D$, which implies the last two terms in the right hand side of  (\ref{2.22a}) are bounded by $C/|x-x_0|$ on $\pa D$. Without loss of generality, we assume that $x_0=(0,0,0)$ and $\nu(x_0)=(0,0,1)$, which further indicates that $|x_{(3)}|\leq C\left(x_{(1)}^2+x_{(2)}^2\right)$ for all $x=(x_{(1)},x_{(2)},x_{(3)})\in\pa D$ by the Taylor's expansion. Then we deduce that 
  \ben
    \frac{|x_{(i)}|}{|x-x_j|}\leq C~~~\quad\text{and}~~~\quad\frac{|x_{(3)}|}{|x-y_j|^2}\leq C 
  \enn
  uniformly for $j\in\N$ on $\pa D$ with $i=1,2$. Moreover, it is derived that 
  \ben
    \frac{1}{j^2|x-y_j|^2}\;&&=\frac{1}{j^2\left(x_{(1)}^2+x_{(2)}^2\right)+j^2x_{(3)}^2-2j\delta x_{(3)}+\delta^2} \\
    &&\leq\frac{1}{Cj^2|x_{(3)}|+j^2x_{(3)}^2-2j\delta x_{(3)}+\delta^2} \\
    &&\leq C,~~~x\in\pa D,
  \enn
  since $\delta>0$ is sufficiently small.
  Now for $x=(x_{(1)},x_{(2)},x_{(3)})\in\pa D$ we obtain that
  \ben
    &&\quad\na_x[\Phi_0(x,x_j)-\Phi_0(x,y_j)]-(\nu(x_0)\cdot\na_x[\Phi_0(x,x_j)-\Phi_0(x,y_j)])\nu(x_0)\\
    &&=\left(\left(\frac{1}{|x-y_j|^3}-\frac{1}{|x-x_j|^3}\right)x_{(1)},\left(\frac{1}{|x-y_j|^3}-\frac{1}{|x-x_j|^3}\right)x_{(2)},0\right).
  \enn
  It is seen that 
  \ben
    &&\quad\left|\left(\frac{1}{|x-y_j|^3}-\frac{1}{|x-x_j|^3}\right)x_{(i)}\right|\\
    &&=\frac{2\delta}{j|x-y_j|}\frac{|x_{(i)}|}{|x-x_j|}\frac{|x_{(3)}|}{|x-y_j|^2}\left(\frac{|x-y_j|}{|x-x_j|^2}+\frac{1}{|x-x_j|+|x-y_j|}\right)\\
    &&\leq \frac{C}{|x-x_0|}
  \enn
  on $\pa D$ with $i=1,2$. Hence, we get 
  \ben
    |\Grad(\Phi_0(x,x_j)-\Phi_0(x,y_j))|\leq\frac{C}{|x-x_0|},~~~x\in\pa D,
  \enn
  which implies $\Grad(\Phi_0(x,x_j)-\Phi_0(x,y_j))\in L^p(\pa D)$ uniformly for $j\in\N$ with $1\leq p<2$ and thus $\Phi_0(x,x_j)-\Phi_0(x,y_j)$ is uniformly bounded in $W^{1,p}(\pa D)$ for $j\in\N$ with $1\leq p<2$. By the classic Sobolev embeddings theorem, we know that the identical map from $W^{1,q}(\pa D)$ into $H^{1/2}(\pa D)$ is bounded for $q>4/3$. Therefore, we derive that the right hand side of (\ref{2.21a}) is uniformly bounded in $H^1(D\setminus\ov D_b)$ from the regularity of $\Phi_1-\Phi_0$ and \cite[Corollary 3.8]{DR13}. 

  Set
  \ben
    \wid f_j(x)=2\int_{\pa D}\frac{\pa\Phi_0(x,y)}{\pa\nu(y)}\Phi_0(y,x_j)ds(y),~~~x\in D.
  \enn
  It is deduced that 
  \ben
  \left\{
  \begin{array}{ll}
    \Delta (\wid f_j+h_j)=0~~~&{\rm in}~D,\\
    \displaystyle\wid f_j(x)+h_j(x)=2\int_{\pa D}\frac{\pa\Phi_0(x,y)}{\pa\nu(y)}\Phi_0(y,x_j)ds(y)~~~&{\rm on}~\pa D, 
  \end{array}
  \right.
  \enn
  where $h_j(x)=\Phi_0(x,x_j)$. Again by \cite[Corollary 3.7]{DR13}, we know that the trace of $\wid f_j+h_j$ on $\pa D$ is uniformly bounded in $H^{1/2}(\pa D)$. Thus the standard elliptic regularity gives $\|\wid f_j+h_j\|_{H^1(D)}\leq C$ uniformly for $j\in\N$. Then we have 
  \be\label{2.3a}
    \left\|\hat v_j-\frac{2}{\lambda(x_0)+1}\Phi_0(x,x_j)\right\|_{H^{1}(D\setminus\ov D_b)}\leq C 
  \en
  for problem \eqref{2.12} with $k^2/\lambda_0=k_1^2>0$. 


  Finally we focus on the difference between $v_j$ and $\hat v_j$. Define $(\wid u_j^s,\wid v_j):=(u_j-\Phi(\cdot,y_j),v_j)-(\hat u_j^s,\hat v_j)$, it then follows that 
  \be\label{2.13}
  \left\{
  \begin{array}{ll}
    \Delta\wid u^s_j+k^2\wid u^s_j=0~~~&{\rm in}~\R^3\setminus\ov D,\\
    \dive(\lambda\nabla\wid v_j)+k^2n\wid v_j=\dive((\lambda(x_0)-\lambda)\na\hat v_j)+k^2(1-n)\hat v_j~~~&{\rm in}~D\setminus\overline{D_b},\\
    \wid u^s_j-\wid v_j=0~~~&{\rm on}~\pa D,\\
    \pa_\nu\wid u^s_j-\left(\lambda\pa_\nu\wid v_j+\gamma\wid v_j\right)=(\lambda-\lambda(x_0))\pa_\nu\hat v_j~~~&{\rm on}~\pa D,\\
    \mathscr{B}(\wid v_j)=0~~~&{\rm on}~\pa D_b,\\
    \displaystyle\lim\limits_{r\rightarrow\infty}r\left(\pa_r\wid u^s_j-ik\wid u^s_j\right)=0,~&r=|x|.
  \end{array}
  \right.
  \en
Since 
 \ben
   \hat v_j=\left(\hat v_j-\frac{2}{\lambda(x_0)+1}\Phi_0(x,x_j)\right)+\frac{2}{\lambda(x_0)+1}\Phi_0(x,x_j), 
 \enn
 we obtain from \eqref{2.3a} that 
 \ben
   \|\dive((\lambda(x_0)-\lambda)\na\hat v_j)+k^2(1-n)\hat v_j\|_{H^{-1}(D\setminus\ov D_b)}+\|(\lambda-\lambda(x_0))\pa_\nu\hat v_j\|_{H^{-1/2}(\pa D)}\leq C
 \enn
 uniformly for $j\in\N$. Hence, by the variational method we derive that 
  \be\label{2.4a}
    \|\wid u_j^s\|_{H^1(B_R\setminus\ov D)}+\|\wid v_j\|_{H^1(D\setminus\ov D_b)}\leq C
  \en
  The proof is then finished by combining \eqref{2.3a} and \eqref{2.4a}. 
\end{proof}

Moreover, if $\lambda=1$ near the boundary $\pa D$, we need to consider the hypersingular point source incidence. To this end, we refer the following theorem (see \cite[Theorem 2.7]{JBH18}). 
\begin{theorem}\label{thm2.4}
  With $u^i(x)=\nabla_x\Phi(x,z)\cdot \vec a$, $\lambda\equiv1$ and $\gamma\equiv0$ for problem \eqref{1.1}, where $z\in\R^3\setminus\ov D$ and $\vec a\in\R^3$ is fixed, the unique solution $(u^s,v)$ satisfies 
  \ben
    \|v\|_{L^p(D\setminus\ov D_b)}+\|v-u^i\|_{H^1(D\setminus\ov D_b)}\leq C(\|u^i\|_{L^p(D\setminus\ov D_b)}+\|\mathscr B(u^i)\|_{L^\infty(\pa D_b)}), 
  \enn
  where $6/5\leq p<3/2$. 
\end{theorem}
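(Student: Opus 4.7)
The plan is to reduce the hypersingular scattering problem to a \emph{standard} transmission problem with controlled data by subtracting $u^i$ from $v$ inside the scatterer, so that the $|x-z|^{-2}$ singularity is absorbed into the forcing terms and the remaining unknowns live in the classical $H^1$ variational framework.

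First, I would set $w := v - u^i$ on $D\setminus\ov{D_b}$. Since $z\in\R^3\setminus\ov D$, the incidence $u^i$ is smooth in a neighborhood of $\ov D$ and satisfies $\Delta u^i+k^2u^i=0$ there. With $\lambda\equiv 1$ and $\gamma\equiv 0$, a direct computation gives
\[
\Delta w+k^2 n\,w = k^2(1-n)\,u^i \text{ in } D\setminus\ov{D_b},
\]
together with the continuous transmission conditions $w=u^s$ and $\pa_\nu w=\pa_\nu u^s$ on $\pa D$, and $\mathscr B(w)=-\mathscr B(u^i)$ on $\pa D_b$. The exterior part $u^s$ still solves the Helmholtz equation in $\R^3\setminus\ov D$ with the Sommerfeld radiation condition, so $(u^s,w)$ solves a standard impedance-hole transmission scattering problem driven by the interior forcing $k^2(1-n)u^i$ and the surface datum $-\mathscr B(u^i)$.

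Next I would control these inhomogeneities in dual Sobolev norms. The three-dimensional Sobolev embedding $H^1(D\setminus\ov{D_b})\hookrightarrow L^{p'}(D\setminus\ov{D_b})$ for $p'\leq 6$ gives, by duality, $L^p(D\setminus\ov{D_b})\hookrightarrow H^{-1}(D\setminus\ov{D_b})$ whenever $p\geq 6/5$. Thus
\[
\|k^2(1-n)u^i\|_{H^{-1}(D\setminus\ov{D_b})} \leq C\,\|u^i\|_{L^p(D\setminus\ov{D_b})},
\]
and this norm is finite precisely because $p<3/2$ keeps $u^i(x)=O(|x-z|^{-2})$ locally $L^p$-integrable in $\R^3$. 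The boundary datum trivially satisfies $\|\mathscr B(u^i)\|_{H^{-1/2}(\pa D_b)}\leq C\|\mathscr B(u^i)\|_{L^\infty(\pa D_b)}$ by the smoothness of $u^i$ near $\pa D_b$. I would then invoke the well-posedness of the standard transmission scattering problem with $\lambda\equiv 1$, $\gamma\equiv 0$ and an impedance obstacle inside: introducing a Dirichlet-to-Neumann operator on a large sphere $\pa B_R$ and writing the coupled weak formulation for $(u^s,w)$ on $B_R\setminus\ov{D_b}$ with continuous transmission across $\pa D$, a coercivity-modulo-compact estimate together with uniqueness of the homogeneous problem (a consequence of $\I(n)\geq 0$, $\rho\geq 0$ and Rellich's lemma) closes the Fredholm alternative and yields
\[
\|u^s\|_{H^1(B_R\setminus\ov D)}+\|w\|_{H^1(D\setminus\ov{D_b})}\leq C\bigl(\|u^i\|_{L^p(D\setminus\ov{D_b})}+\|\mathscr B(u^i)\|_{L^\infty(\pa D_b)}\bigr).
\]
The conclusion follows from $v=w+u^i$ together with $\|w\|_{L^p(D\setminus\ov{D_b})}\leq C\|w\|_{H^1(D\setminus\ov{D_b})}$.

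The main obstacle is the third step: establishing \emph{uniform} solvability of the transmission problem with simultaneously an $H^{-1}$ interior source and an $L^\infty$ impedance datum on the inner boundary, so that the constant $C$ is independent of $z$. The threshold $p\geq 6/5$ is forced precisely by the failure of the $L^p\hookrightarrow H^{-1}$ embedding in three dimensions below that exponent; no weaker integrability of $u^i$ can make the forcing land in the dual of $H^1_0(D\setminus\ov{D_b})$, which is what the variational solver requires.
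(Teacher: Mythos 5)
The paper does not actually prove Theorem \ref{thm2.4}: it is quoted verbatim from \cite[Theorem 2.7]{JBH18}, so there is no internal proof to compare your argument against. That said, your reduction is the natural one and is essentially sound, and it is the standard route to such estimates: because $\lambda\equiv1$ and $\gamma\equiv0$ the jump conditions across $\pa D$ are homogeneous after subtracting $u^i$ inside, so $u^s$ and $w=v-u^i$ glue to a single radiating $H^1_{loc}$ field $W$ on $\R^3\setminus\ov{D_b}$ solving $\Delta W+k^2nW=k^2(1-n)u^i\chi_{D\setminus\ov{D_b}}$ with $\mathscr B(W)=-\mathscr B(u^i)$ on $\pa D_b$, and the source defines a bounded antilinear functional on $H^1$ of the truncated domain exactly when $p\geq 6/5$ (via $H^1\hookrightarrow L^6$ in $\R^3$); the estimate then follows from well-posedness of this fixed scattering problem. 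Three refinements are worth recording. First, the uniformity you single out as ``the main obstacle'' is automatic: the solution operator of the reduced problem is one fixed bounded map that never sees $z$; the dependence on $z$ enters only through the data norms, which are precisely the right-hand side of the claimed estimate (also, phrase the duality against $H^1$ of the computational domain rather than the dual of $H^1_0(D\setminus\ov{D_b})$, since the test functions in the variational formulation do not vanish on $\pa D$). Second, bounding the datum on $\pa D_b$ by its $L^\infty$ norm is adequate for the Neumann/impedance case to which Section \ref{sec2} of the paper restricts, but if $\mathscr B$ has a Dirichlet part the variational solver needs the $H^{1/2}(\pa D_b)$ trace norm, which $L^\infty(\pa D_b)$ alone does not dominate; this is easily repaired here because the singular point $z$ stays at distance at least ${\rm dist}(\pa D,D_b)>0$ from $\ov{D_b}$, so all trace norms of $u^i$ on $\pa D_b$ are controlled by explicit constants. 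Third, uniqueness of the homogeneous reduced problem requires unique continuation from $\R^3\setminus\ov D$ into $D\setminus\ov{D_b}$ in addition to Rellich's lemma and the sign conditions on $\I(n)$ and $\rho$; this holds for $n\in L^\infty$ but should be stated.
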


Based on Theorems \ref{thm2.8} and \ref{thm2.4}, we can further prove the following result for the case $\gamma\neq0$. 
\begin{theorem}\label{thm2.7}
	Suppose $\gamma\in C^{0,\alpha_1}\left(\pa D\right)$ with $1/2<\alpha_1\leq1$ and there exists an open neighborhood $O_1(\pa D)$ of $\pa D$ such that $\lambda=1$ in $O_1(\pa D)$. Let $(u^s_j,v_j)$ be the unique solution for problem {\rm (\ref{1.1})} with the incident wave $u^i_j(x)=\na_x\Phi(x,x_j)\cdot\nu(x_0)$. Then we have 
	$$\left\|v_j-u_j^i+(\gamma(x_0)/2)\Phi_0(x,x_j)\right\|_{H^1(D\setminus\ov D_b)}\leq C$$
	uniformly for $j\in\N$.
\end{theorem}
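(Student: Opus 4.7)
The proof combines the auxiliary-problem strategy of Theorem~\ref{thm2.8} with the hypersingular estimate of Theorem~\ref{thm2.4}: split $v_j$ into a constant-coefficient piece carrying the exact leading singularity plus a uniformly $H^1$-bounded remainder. I would let $(\hat u^s_j,\hat v_j)$ solve (\ref{1.1}) with the same incident wave $u^i_j$ but with $\la$ and $n$ replaced by the constant $1$ throughout $D\se\ov D_b$, with $\g$ replaced by the constant $\g(x_0)$ on $\pa D$, and with the same $\mathscr B$ on $\pa D_b$. The theorem then reduces to the two uniform bounds
\ben
(\mathrm{i})~\left\|\hat v_j-u^i_j+\frac{\g(x_0)}{2}\Phi_0(\cdot,x_j)\right\|_{H^1(D\se\ov D_b)}\leq C,\qquad(\mathrm{ii})~\|v_j-\hat v_j\|_{H^1(D\se\ov D_b)}\leq C,
\enn
combined by the triangle inequality.

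For (i), insert a further auxiliary $(\breve u^s_j,\breve v_j)$ of (\ref{1.1}) with $\la\equiv1$, $n\equiv1$, $\g\equiv0$. Theorem~\ref{thm2.4} directly yields $\|\breve v_j-u^i_j\|_{H^1(D\se\ov D_b)}\leq C$, once one checks $\|u^i_j\|_{L^p(D\se\ov D_b)}\leq C$ for some $p\in[6/5,3/2)$, which follows from $\mathrm{dist}(x_j,\pa D)\asymp 1/j$. The residue $r_j:=\hat v_j-\breve v_j$ then satisfies a constant-coefficient transmission problem with zero incidence, zero volume source, and boundary source $\g(x_0)\breve v_j|_{\pa D}$ in the second transmission condition. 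I would represent $r_j$ and its exterior companion by single- and double-layer potentials on $\pa D$ together with a single-layer on $\pa D_b$, set up a Fredholm system for the densities analogous to (\ref{2.2}), and use the jump relation of the single layer together with $\breve v_j|_{\pa D}\approx u^i_j|_{\pa D}$ to identify the singular part of $r_j$ as $-(\g(x_0)/2)\Phi_0(\cdot,x_j)$. This step is the core computation and proceeds in the same spirit as the derivation of (\ref{2.20a}).

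For (ii), the pair $(\wid u^s_j,\wid v_j):=(u^s_j-\hat u^s_j,v_j-\hat v_j)$ satisfies a transmission problem with zero incidence, field source $-\dive((\la-1)\na\hat v_j)+k^2(1-n)\hat v_j$ in $D\se\ov D_b$, and boundary source $(\g-\g(x_0))\hat v_j$ on $\pa D$ (the $(\la-1)\pa_\nu\hat v_j$ piece on $\pa D$ vanishes because $\la\equiv1$ there). Since $\la-1$ is supported outside $O_1(\pa D)$, $\hat v_j$ is smooth with $j$-uniform bounds on that set, so $\dive((\la-1)\na\hat v_j)$ is uniformly bounded in $H^{-1}$; the $(1-n)\hat v_j$ piece is controlled by $\|\hat v_j\|_{L^p(D\se\ov D_b)}\leq C$ for $p<3/2$ together with the embedding $L^{6/5}\hookrightarrow H^{-1}$. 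A variational estimate for the conductive transmission problem then delivers (ii), provided the boundary source $(\g-\g(x_0))\hat v_j$ is uniformly bounded in $H^{-1/2}(\pa D)$.

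The main obstacle is precisely this last boundary estimate. The leading singular trace $\hat v_j|_{\pa D}\approx u^i_j|_{\pa D}$ is of size $|x-x_j|^{-2}$ with $|x-x_j|\asymp|x-x_0|+1/j$ on $\pa D$, which is not uniformly in $L^2(\pa D)$; the H\"older hypothesis $|\g(x)-\g(x_0)|\leq C|x-x_0|^{\al_1}$ with $\al_1>1/2$ is exactly the weight needed to put $(\g-\g(x_0))\hat v_j$ into $H^{-1/2}(\pa D)$ with a $j$-uniform bound, which is what forces the assumption $\al_1>1/2$ in the statement.
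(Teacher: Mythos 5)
Your overall architecture (freeze the coefficients at $x_0$, invoke Theorems \ref{thm2.4} and \ref{thm2.8}, then correct by a variational argument) is sound, and your step (ii) is essentially right: the only delicate datum there is $(\gamma-\gamma(x_0))u^i_j$, and the H\"older gain $|x-x_0|^{\alpha_1}$ with $\alpha_1>1/2$ indeed places it in $L^p(\pa D)\hookrightarrow H^{-1/2}(\pa D)$ for some $p\in(4/3,2/(2-\alpha_1))$, exactly as the paper exploits in \eqref{2.11}. The gap is in your step (i), which you call ``the core computation'' and claim proceeds ``in the same spirit as \eqref{2.20a}.'' It does not. Your residual $r_j=\hat v_j-\breve v_j$ carries the Neumann-jump datum $\gamma(x_0)\breve v_j|_{\pa D}\approx\gamma(x_0)\,u^i_j|_{\pa D}$, i.e.\ the \emph{raw} hypersingular trace $\na_x\Phi(x,x_j)\cdot\nu(x_0)\sim|x-x_j|^{-2}$, with no H\"older or curvature factor to tame it. This trace is not uniformly bounded in $L^p(\pa D)$ for any $p>1$, nor in $H^{-1/2}(\pa D)$ (it behaves like an approximate Dirac mass of weight $1/2$ at $x_0$, whose $H^{-1/2}$ norm grows like $\sqrt{j}$). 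Consequently a Fredholm system ``analogous to \eqref{2.2}'' cannot be solved with $j$-uniform density bounds for this right-hand side, and the mechanism that produced \eqref{2.20a} (uniformly $L^p$/continuous data, uniformly bounded densities, singularity extracted from a double layer applied to an $L^p$ density) is unavailable. To salvage your route you would need a genuinely new computation: identify $u^i_j|_{\pa D}$ as an approximate half-weight delta at $x_0$, show that its single-layer potential equals $\tfrac12\Phi_0(\cdot,x_j)$ up to a uniformly $H^1$-bounded remainder, and control all the resulting error terms — none of which is sketched.

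The paper avoids this trap by a different decomposition: it adds the point-source piece $(\hat U^s_j,\hat V_j)$ with incident wave $-\gamma(x_0)\Phi(\cdot,x_j)$ (handled by Theorem \ref{thm2.8}), so that in the correction problem \eqref{2.10} the strongly singular datum enters only through the \emph{Dirichlet} jump $-\gamma(x_0)\Phi(\cdot,x_j)$, which is uniformly in $L^p(\pa D)$ for $p<2$, while the Neumann-jump datum is rearranged as in \eqref{2.11} so that the hypersingular factor appears only in the combinations $(\gamma-\gamma(x_0))u^i_j$ and $\gamma(x_0)\na_x\Phi(\cdot,x_j)\cdot(\nu(x_0)-\nu(x))$, tamed respectively by the H\"older continuity of $\gamma$ and the $C^2$ regularity of $\pa D$. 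The $(\gamma(x_0)/2)\Phi_0$ singularity then comes out of the Dirichlet-jump datum through the same machinery as Theorem \ref{thm2.8}. Unless you rework your step (i) along these lines (or carry out the approximate-identity computation above in full), your proposal does not yet constitute a proof.
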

\begin{proof}
  By Theorem \ref{thm2.4}, there exists a unique solution $(\hat u^s_j,\hat v_j)$ for problem \eqref{1.1} with $\lambda\equiv1, \gamma\equiv0$ and incident waves $u^i_j$ 
  such that
  \be\label{2.7} 
    \|\hat v_j-u^i_j\|_{H^1(D\setminus\ov D_b)}\leq C(\|u^i_j\|_{L^p(D\setminus\ov D_b)}+\|\mathscr B(u^i_j)\|_{L^\infty(\pa D_b)})\leq C. 
  \en
  From Theorem \ref{thm2.8}, the unique solution $(\hat U^s_j,\hat V_j)$ for problem \eqref{1.1} with $\lambda\equiv1$ and the incident waves $-\gamma(x_0)\Phi(\cdot,x_j)$ 
  satisfies 
  \be\label{2.9}
    \|\hat V_j+\gamma(x_0)\Phi(x,x_j)\|_{H^1(D\setminus\ov D_b)}\leq C. 
  \en

  Now consider the following problem
  \be\label{2.10}
  \left\{
  \begin{array}{ll}
    \Delta\wid u^s_j+k^2\wid u^s_j=0~~~&{\rm in}~\R^3\setminus\ov D,\\
    \Delta\wid v_j+k^2n\wid v_j=0~~~&{\rm in}~D\setminus\overline{D_b},\\
    \wid u^s_j-\wid v_j=-\gamma(x_0)\Phi(x,x_j)~~~&{\rm on}~\pa D,\\
    \pa_\nu\wid u^s_j-\left(\pa_\nu\wid v_j+\gamma\wid v_j\right)=\gamma(\hat v_j+\hat V_j)-\gamma(x_0)\pa_\nu \Phi(x,x_j)~~~&{\rm on}~\pa D,\\
    \mathscr{B}(\wid v_j)=0~~~&{\rm on}~\pa D_b,\\
    \displaystyle\lim\limits_{r\rightarrow\infty}r\left(\pa_r\wid u^s_j-ik\wid u^s_j\right)=0,~&r=|x|.
  \end{array}
  \right.
  \en
  Clearly, $\Phi(x,x_j)\in L^p(\pa D)$ for $1\leq p<2$ and $j\in\N$. Further, it can be verified that
  \be\label{2.11}\nonumber
    \gamma(\hat v_j+\hat V_j)-\gamma(x_0)\pa_\nu \Phi(x,x_j)=&&\;\gamma(\hat v_j-u^i_j)+\gamma(\hat V_j+\gamma(x_0)\Phi(x,x_j))\\ \nonumber
    &&\;-\gamma(x_0)\gamma\Phi(x,x_j)+(\gamma(x)-\gamma(x_0))u^i_j\\
    &&\;+\gamma(x_0)\na_x\Phi(x,x_j)\cdot(\nu(x_0)-\nu(x)),
  \en
  which implies that the right hand side of (\ref{2.11}) is uniformly bounded in $L^p(\pa D)$ with $4/3\leq p<2/(2-\alpha_1)$ due to (\ref{2.7}), (\ref{2.9}) as well as the regularity of the boundary $\pa D$ and $\gamma$. Therefore, arguing similarly as in the proof of Theorem \ref{thm2.8}, we can obtain that
  \be\label{2.114}
  \|\wid v_j-(\gamma(x_0)/2)\Phi_0(x,x_j)\|_{H^1(D\setminus\ov D_b)}\leq C.
  \en

  Let $U^s_j=\hat u^s_j+\hat U^s_j+\wid u^s_j$ and $V_j=\hat v_j+\hat V_j+\wid v_j$. Finally, we define $(\wid U_j^s,\wid V_j)$ by
  \be\label{2.115}
  \left\{
  \begin{array}{ll}
    \Delta\wid U^s_j+k^2\wid U^s_j=0~~~&{\rm in}~\R^3\setminus\ov D,\\
    \dive(\lambda\na\wid V_j)+k^2n\wid V_j=\dive((1-\lambda)\nabla V_j)~~~&{\rm in}~D\setminus\overline{D_b},\\
    \wid U^s_j-\wid V_j=0~~~&{\rm on}~\pa D,\\
    \pa_\nu\wid U^s_j-\left(\pa_\nu\wid V_j+\gamma\wid V_j\right)=0~~~&{\rm on}~\pa D,\\
    \mathscr{B}(\wid V_j)=0~~~&{\rm on}~\pa D_b,\\
    \displaystyle\lim\limits_{r\rightarrow\infty}r\left(\pa_r\wid U^s_j-ik\wid U^s_j\right)=0,~&r=|x|.
  \end{array}
  \right.
  \en
  It is easily checked that $u_j^s=U_j^s+\wid U_j^s$ and $v_j=V_j+\wid V_j$. Since $\lambda=1$ in $O_1(\pa D)$, it follows that $\|\dive((1-\lambda)\nabla V_j)\|_{H^{-1}(D\setminus\ov D_b)}\leq C$, which implies by the variational method that $(\wid U_j^s,\wid V_j)$ is well defined and $\|\wid V_j\|_{H^1(D\setminus\ov D_b)}\leq C$ uniformly for $j\in\N$. Therefore, combining (\ref{2.7}), (\ref{2.9}) and (\ref{2.114}), we obtain the desired estimate and the proof is complete. 
\end{proof}
\begin{remark}\label{rem2.3}
	The estimates in Theorems {\rm \ref{thm2.8}} and {\rm \ref{thm2.7}} are of great significance in the uniqueness proof of the inverse problem. We hope that these kinds estimates can be extended to other cases and genearlized to derive a better uniqueness results for the inverse problem. 
\end{remark}

\section{The inverse problem}\label{sec3}
In this section, we shall study two interior transmission problems and then apply a novel technique to prove the uniqueness theorem in determination of the object and relating coefficients. We show that the object $D$, the transmission coefficient $\lambda$, the conductive coefficient $\gamma$, the refractive index $n$ and also some of their derivatives can be uniquely recovered, for which the proof is greatly relied on the singularity of the solutions and the aforementioned interior transmission problems.

\subsection{The interior transmission problem}\label{sec3.1}
\setcounter{equation}{0}
In this subsection, we introduce two types of interior transmission problems corresponding to the case $\lambda\neq1$ and $\lambda=1$ near the boundary $\pa D$, respectively. 

Denote by $\Om$ a bounded and simply connected domain in $\R^3$ with $\pa\Om\in C^2$. For the case $\lambda\neq1$, we consider the following interior transmission problem
\be\label{3.1}
  \left\{
  \begin{array}{ll}
    \dive(\lambda_1\na U)-b_1U=g_1~~~&{\rm in}~\Om,\\
    \dive(\lambda_2\na V)-b_2V=g_2~~~&{\rm in}~\Om,\\
    U-V=h_1~~~&{\rm on}~\pa\Om,\\
    \lambda_1\pa_\nu U-\lambda_2\pa_\nu V=h_2~~~&{\rm on}~\pa\Om,
  \end{array}
  \right.
\en
with $\lambda_1,\lambda_2\in C(\ov\Om)$, $g_1,g_2\in L^2(\Om)$, $h_1\in H^{1/2}(\pa\Om)$, $h_2\in H^{-1/2}(\pa\Om)$ and $b_1$, $b_2$ positive constants. This problem was studied in \cite{JY21}, and we have the result below (see \cite[Corollary 2.2]{JY21}).
\begin{lem}\label{lem3.1}
  Assume $\lambda_i>0$ in $\ov\Om$ with $i=1,2$. If there exists a positive constant $\varepsilon<\min\{b_2,1\}$ such that
  \ben
    \varepsilon b_1>\left(\frac{1+b_2}{2}\right)^2~~~{\rm and}~~~\varepsilon\left(\inf\limits_{\Om}\frac{\lambda_1}{\lambda_2}\right)>\left(\frac{1+b_2}{2}\right)^2,
  \enn
  then the problem {\rm (\ref{3.1})} has a unique solution $(U,V)$ satisfying that
  \ben
    \|U\|_{H^1(\Om)}+\|V\|_{H^1(\Om)}\leq C(\|g_1\|_{L^2(\Om)}+\| g_2\|_{L^2(\Om)}+\|h_1\|_{H^{1/2}(\pa\Om)}+\|h_2\|_{H^{-1/2}(\pa\Om)}).
  \enn
\end{lem}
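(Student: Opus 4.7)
My plan is to establish both existence and uniqueness via a variational T-coercivity argument. First, I would lift the Dirichlet-type transmission datum $h_1\in H^{1/2}(\pa\Om)$ by choosing $\Psi\in H^1(\Om)$ with $\Psi|_{\pa\Om}=h_1$ and $\|\Psi\|_{H^1(\Om)}\leq C\|h_1\|_{H^{1/2}(\pa\Om)}$, and replace $U$ by $U-\Psi$, absorbing $h_1$ into modified right-hand sides in $L^2(\Om)$ and $H^{-1/2}(\pa\Om)$ (using the $C(\ov\Om)$ regularity of $\lambda_1$). This reduces the problem to the closed subspace
\[
X := \{(U,V)\in H^1(\Om)\ti H^1(\Om):U=V\text{ on }\pa\Om\}.
\]
Testing the two equations against $(\bar\varphi,\bar\psi)$ with $(\varphi,\psi)\in X$, integrating by parts, and invoking the Neumann transmission condition $\lambda_1\pa_\nu U-\lambda_2\pa_\nu V = h_2$ together with $\varphi=\psi$ on $\pa\Om$, subtraction of the two identities yields the variational problem with bilinear form
\[
a((U,V),(\varphi,\psi)) = \int_\Om\bigl(\lambda_1\na U\cdot\na\bar\varphi + b_1 U\bar\varphi\bigr) - \int_\Om\bigl(\lambda_2\na V\cdot\na\bar\psi + b_2 V\bar\psi\bigr).
\]
The form $a$ is bounded on $X$ but sign-indefinite, so Lax--Milgram cannot be applied directly.

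To overcome the indefiniteness I would employ the T-coercivity framework with the involution $T(\varphi,\psi) := (\varphi,2\varphi-\psi)$, which maps $X$ to itself because $\varphi=\psi$ on $\pa\Om$ forces $2\varphi-\psi=\varphi$ there. A direct substitution gives
\[
\Rt\,a((U,V),T(U,V)) = \int_\Om\bigl(\lambda_1|\na U|^2 + b_1|U|^2 + \lambda_2|\na V|^2 + b_2|V|^2\bigr) - 2\,\Rt\int_\Om\bigl(\lambda_2\na V\cdot\na\bar U + b_2 V\bar U\bigr).
\]
The two cross terms are then absorbed via weighted AM--GM bounds
\[
2\lambda_2|\na V||\na U| \leq \tau\lambda_1|\na U|^2 + \tau^{-1}(\lambda_2^2/\lambda_1)|\na V|^2, \quad 2b_2|V||U|\leq \sigma b_1|U|^2 + \sigma^{-1}(b_2^2/b_1)|V|^2,
\]
and a direct parameter optimisation pinpoints the critical threshold as $((1+b_2)/2)^2$. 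The three hypotheses $\varepsilon<\min\{b_2,1\}$, $\varepsilon b_1>((1+b_2)/2)^2$, and $\varepsilon\inf_\Om(\lambda_1/\lambda_2)>((1+b_2)/2)^2$ are calibrated precisely to beat this threshold with a positive margin on every resulting coefficient.

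The main obstacle is the careful bookkeeping in the weighted AM--GM step, in particular choosing $\tau,\sigma$ so that all four coefficients of $\int\lambda_1|\na U|^2$, $\int b_1|U|^2$, $\int\lambda_2|\na V|^2$, $\int b_2|V|^2$ simultaneously remain strictly positive under the three hypotheses. Once T-coercivity of $b(\cdot,\cdot) := a(\cdot,T\,\cdot)$ has been established on $X$, the Lax--Milgram lemma yields at one stroke the existence and uniqueness of $(U,V)$ together with the claimed estimate
\[
\|U\|_{H^1(\Om)} + \|V\|_{H^1(\Om)} \leq C\bigl(\|g_1\|_{L^2(\Om)} + \|g_2\|_{L^2(\Om)} + \|h_1\|_{H^{1/2}(\pa\Om)} + \|h_2\|_{H^{-1/2}(\pa\Om)}\bigr).
\]
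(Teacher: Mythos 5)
Your proposal is correct, but it follows a genuinely different route from the paper, because the paper gives no proof of Lemma \ref{lem3.1} at all: it simply imports the statement as Corollary 2.2 of \cite{JY21}. Your T-coercivity argument is a legitimate self-contained alternative. After the lifting of $h_1$, the variational problem on $X=\{(U,V)\in H^1(\Omega)\times H^1(\Omega):U=V \mbox{ on }\partial\Omega\}$ with the form $a$ and the involution $T(\varphi,\psi)=(\varphi,2\varphi-\psi)$ is coercive as soon as one can choose $\tau\in\bigl(\sup_\Omega(\lambda_2/\lambda_1),1\bigr)$ and $\sigma\in\bigl(b_2/b_1,1\bigr)$ in your weighted Young inequalities, i.e.\ as soon as $b_1>b_2$ and $\inf_\Omega(\lambda_1/\lambda_2)>1$; and these two conditions are indeed implied by the stated hypotheses, since $\bigl((1+b_2)/2\bigr)^2\geq b_2$ and $\varepsilon<\min\{b_2,1\}$ give $b_1>\bigl((1+b_2)/2\bigr)^2/\varepsilon>b_2$ and $\inf_\Omega(\lambda_1/\lambda_2)>\bigl((1+b_2)/2\bigr)^2/\varepsilon>1$. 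Then Lax--Milgram applied to $a(\cdot,T\cdot)$, together with the standard recovery of the two PDEs (test with $(\varphi,0)$, $(0,\psi)$, $\varphi,\psi\in H^1_0(\Omega)$) and of the conormal jump condition, yields existence, uniqueness and the estimate. Two caveats, neither of which invalidates the proof: (i) the right-hand side produced by the lifting of $h_1$ is only a bounded antilinear functional on $X$ (an $H^{-1}(\Omega)$-type term $\int_\Omega\lambda_1\nabla\Psi\cdot\nabla\bar\varphi$), not a modified $g_1\in L^2(\Omega)$ as you state --- this is harmless for Lax--Milgram but should be phrased correctly, and the constant in the final estimate still only involves $\|h_1\|_{H^{1/2}(\partial\Omega)}$; (ii) your parameter optimisation does not actually ``pinpoint'' the threshold $\bigl((1+b_2)/2\bigr)^2$ --- with your choice of $T$ the natural sufficient conditions are the weaker ones $b_1>b_2$ and $\inf_\Omega(\lambda_1/\lambda_2)>1$, and the specific constants in the lemma reflect the different argument used in \cite{JY21}. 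In short, the citation buys the paper brevity, while your route buys a self-contained proof that in fact establishes the lemma under slightly weaker hypotheses.
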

\begin{remark}\label{remark3.2}
  The condition $\varepsilon<b_2$ excludes the case that $\inf\limits_{\Om}(\lambda_1/\lambda_2)\leq1$ since $[(1+b_2)/2]^2\geq b_2$. But sometimes we may exchange $\lambda_1$ and $\lambda_2$ to make the conditions can be satisfied.
\end{remark}

For the case $\lambda=1$, we introduce the interior transmission problem of the following type
\be\label{3.2}
  \left\{
  \begin{array}{ll}
    \Delta v+k^2n_1v=0~~~&{\rm in}~\Om,\\
    \Delta w+k^2n_2w=0~~~&{\rm in}~\Om,\\
    v-w=f_1~~~&{\rm on}~\pa\Om,\\
    \pa_\nu v+\eta w-\pa_\nu w=f_2~~~&{\rm on}~\pa\Om,
  \end{array}
  \right.
\en
where $f_1\in H^{1/2}(\pa\Om)$, $f_2\in H^{-1/2}(\pa\Om)$, $n_1,n_2\in L^\infty(\Om)$ and $\eta\in L^\infty(\pa\Om)$. To get the well-posedness of the problem (\ref{3.2}), we give a lemma first.
\begin{lem}\label{lem3.2}
  Let $\Om\subset\R^n(n\geq2)$ be a bounded domain with $\pa\Om\in C^2$. Define 
  $$\displaystyle C_1(\Om):=\ds\mathop{\sup\limits_{u\in \wid H_0^2(\Om)}}\limits_{u\neq0}\frac{\displaystyle\int_{\pa\Om}|\pa_\nu u|^2ds}{\displaystyle\int_{\Om}|\Delta u|^2dx},$$
  where $\wid H^2_0(\Om)=H^2(\Om)\cap H^1_0(\Om)$. Then $C_1(\Om)\rightarrow0$ as $diam(\Om)\rightarrow0$.
\end{lem}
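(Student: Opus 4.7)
The plan is to exploit the natural scaling of the Laplacian and the normal--trace operator under isotropic dilations. Translating so that some interior point of $\Om$ lies at the origin and writing $d=diam(\Om)$, I would set $\Om':=\Om/d$, which is a $C^2$ domain of diameter $1$ geometrically similar to $\Om$. For each $u\in \wid H_0^2(\Om)$ define the rescaled function $u'(y):=u(dy)$ on $\Om'$; clearly $u'\in \wid H_0^2(\Om')$ and $u\mapsto u'$ is a bijection, so the supremum defining $C_1(\Om)$ can be recomputed in terms of $u'$.

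A direct change of variables, using $\na_x u(x)=d^{-1}\na_y u'(y)$, $\Delta_x u(x)=d^{-2}\Delta_y u'(y)$, $ds(x)=d^{n-1}ds'(y)$, and $dx=d^n dy$, yields
\[
\int_{\pa\Om}|\pa_\nu u|^2\,ds \;=\; d^{\,n-3}\int_{\pa\Om'}|\pa_{\nu'}u'|^2\,ds',\qquad \int_{\Om}|\Delta u|^2\,dx \;=\; d^{\,n-4}\int_{\Om'}|\Delta u'|^2\,dy.
\]
Dividing and taking the supremum over nonzero $u$ yields the key scaling identity $C_1(\Om)=d\,C_1(\Om')$, so the problem reduces to bounding $C_1(\Om')$ on the unit--diameter reference domain.

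To bound $C_1(\Om')$ I would combine two classical ingredients on the $C^2$ domain $\Om'$: first, the $H^2$ elliptic regularity estimate $\|u'\|_{H^2(\Om')}\leqs C\|\Delta u'\|_{L^2(\Om')}$ for every $u'\in \wid H_0^2(\Om')$, which follows from the unique solvability of the Dirichlet problem for the Poisson equation in $H^2\cap H^1_0$; and second, the continuity of the normal--trace map $H^2(\Om')\to H^{1/2}(\pa\Om')\hookrightarrow L^2(\pa\Om')$. Composing these gives $\|\pa_{\nu'}u'\|_{L^2(\pa\Om')}\leqs C\|\Delta u'\|_{L^2(\Om')}$, so $C_1(\Om')\leqs C<\ify$. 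Inserting this into the scaling identity produces $C_1(\Om)\leqs C\,d\to 0$ as $diam(\Om)\to 0$.

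The main delicate point I would flag is the uniformity of the constant in the bound for $C_1(\Om')$: both the elliptic regularity constant and the trace constant depend on the $C^2$ character of $\pa\Om'$, so one must read the statement relative to a family of domains whose rescaled shapes remain under uniform $C^2$ control (for example, rescaled copies of a fixed reference neighborhood near a boundary point of $\pa D$, which is the situation actually used later in Section~\ref{sec3.2}). Once this uniformity is granted, the scaling identity together with the standard $H^2$--regularity/trace pair is the entire content of the proof.
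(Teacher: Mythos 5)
Your proof is correct and is essentially the paper's argument: finiteness of $C_1(\Om)$ via the $H^2$ elliptic estimate combined with the trace theorem, followed by the dilation change of variables that produces the factor $diam(\Om)$ (the paper computes exactly your exponents, in the form $C_1(B_R(x_0))\le C_0 R$ with $C_0=C_1(B_1)$). The only difference is that the paper deals with the uniformity issue you flag by restricting ``without loss of generality'' to balls, so the rescaled domain is the fixed unit ball and the constant is a single number; your explicit caveat that the bound on $C_1(\Om')$ requires uniform $C^2$ control of the rescaled shapes is the careful version of that reduction and matches how the lemma is actually used later for the small domains $D_0$.
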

\begin{proof}
  The standard elliptic regularity gives 
  \ben
    \|u\|_{H^2(\Om)}\leq C\|\Delta u\|_{L^2(\Om)},~~~\forall u\in \wid H^2_0(\Om).
  \enn
  Then by the trace theorem we obtain that $C_1(\Om)<\infty$ for any bounded domain $\Om$ with $\pa\Om\in C^2$.

  Without loss of generality, we only consider the case that $\Om$ are open balls in $\R^n$. Set $C_0=C_1(B_1)$, in other words,
  \ben
    \int_{\pa B_1}\left|\pa_\nu u\right|^2ds\leq C_0\int_{B_1}|\Delta u|^2dx,~~~\forall u\in \wid H^2_0(B_1).
  \enn
  Now for fixed $B_R(x_0)\subset\R^n$, for any $u\in\wid H^2_0(B_R(x_0))$, let $v(y)=u(x_0+Ry)$, $y\in B_1$. Then $v(y)\in\wid H^2_0(B_1)$. Hence, we have
  \ben
    \int_{\pa B_1}\left|\pa_\nu v\right|^2ds\leq C_0\int_{B_1}|\Delta v|^2dx.
  \enn
  Changing the integral variable gives that
  \ben
    &&\int_{\pa B_1}\left|\pa_\nu v\right|^2ds=R^{3-n}\int_{\pa B_R(x_0)}\left|\pa_\nu u\right|^2ds,\\
    &&\int_{B_1}|\Delta v|^2dx=R^{4-n}\int_{B_R(x_0)}|\Delta u|^2dx,
  \enn
  which implies that
  \ben
    \int_{\pa B_R(x_0)}\left|\pa_\nu u\right|^2ds\leq C_0R\int_{B_R(x_0)}|\Delta u|^2dx,~~~\forall u\in \wid H^2_0(B_R(x_0)).
  \enn
  Thus we obtain $C_1(B_R(x_0))\leq C_0R$ and the conclusion follows.
\end{proof}

Denote by $\lambda_1(\Om)$ the first Dirichlet eigenvalue of $-\Delta$ in $\Om$. We recall that 
$$\lambda_1(\Om)=\mathop{\inf\limits_{u\in H^1_0(\Om)}}\limits_{u\neq0}\frac{\displaystyle\int_{\Om}|\nabla u|^2dx}{\displaystyle\int_{\Om}|u|^2dx}$$ 
and $\lambda_1(\Om)\rightarrow\infty$ as $diam(\Om)\rightarrow0$.
Define the Hilbert space
\ben
  H_\Delta^1(\Om):=\{u\in H^1(\Om),\Delta u\in L^2(\Om)\}
\enn
with the inner product $(u,v)_{H_\Delta^1(\Om)}=(u,v)_{H^1(\Om)}+(\Delta u,\Delta v)_{L^2(\Om)}$. Clearly, for $u\in H_\Delta^1(\Om)$ we have that $u|_{\pa\Om}\in H^{1/2}(\pa\Om)$ and $\partial_\nu u\in H^{-1/2}(\pa\Om)$. In particular, if $u=\pa_\nu u=0$ on $\pa\Om$ for some $u\in H^1_\Delta(\Om)$, then $u\in H^2_0(\Om)$. Now we give the following uniqueness and existence theorem for problem (\ref{3.2}). 
\begin{theorem}\label{thm3.3}
  Suppose there exists a $u_0\in H_\Delta^1(\Om)$ such that $u_0=f_1$ and $\pa_\nu u_0=f_2$ on $\pa\Om$. Assume in addition that $n_1,n_2>\delta_1$ with $|n_1-n_2|>\delta_1$ for some constant $\delta_1>0$. Then problem {\rm (\ref{3.2})} has a unique solution $(v,w)\in L^2(\Om)\times L^2(\Om)$ such that
  \ben
    \|v\|_{L^2(\Om)}+\|w\|_{L^2(\Om)}\leq C\|u_0\|_{H_\Delta^1(\Om)},
  \enn
  provided $(k,n_1,n_2,\eta,\Om)$ satisfying one of the following conditions
  \begin{enumerate}[{\rm 1.}]
    \item $n_1-n_2<0, \eta>\delta_2$ and $\displaystyle k^2<\frac{\lambda_1(\Om)\inf_\Om n_2}{(\sup_{\Om} n_2)^2}$, \\
    \item $n_1-n_2>0, \eta<-\delta_2$ and $\displaystyle k^2<\frac{\lambda_1(\Om)\inf_\Om n_1}{(\sup_\Om n_1)^2}$, \\
    \item $n_1-n_2>0, \eta>\delta_2, $and $$0<\frac{C'C_1(\Om)k^2}{C'\inf_\Om|\eta|-C_1(\Om)k^2}<\frac{\lambda_1(\Om)}{k^2(\sup_\Om n_1)^2}-\frac{1}{\inf_\Om n_1},$$
    \item $n_1-n_2<0, \eta<-\delta_2, $and $$0<\frac{C'C_1(\Om)k^2}{C'\inf_\Om|\eta|-C_1(\Om)k^2}<\frac{\lambda_1(\Om)}{k^2(\sup_\Om n_2)^2}-\frac{1}{\inf_\Om n_2},$$ 
    \item $n_1-n_2<0, \eta=0$ and $\displaystyle k^2<\frac{\lambda_1(\Om)\inf_\Om n_2}{(\sup_\Om n_2)^2}$, \\
    \item $n_1-n_2>0, \eta=0$ and $\displaystyle k^2<\frac{\lambda_1(\Om)\inf_\Om n_1}{(\sup_\Om n_1)^2}$, \\
  \end{enumerate}
  for some positive constant $\delta_2$ and $C':=1/\sup_\Om|n_1-n_2|$.
\end{theorem}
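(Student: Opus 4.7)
The plan is to recast problem (\ref{3.2}) as a fourth-order variational problem in the Hilbert space $\wid H^2_0(\Om) := H^2(\Om) \cap H^1_0(\Om)$ and close the argument with Lax--Milgram. First I reduce to homogeneous boundary data via the lifting $u_0$: setting $U_1 := v - u_0$, $U_2 := w$, the pair $(U_1,U_2)$ satisfies the same transmission system with vanishing interface data at the price of an $L^2$-source $-(\Delta + k^2 n_1)u_0$ on the right of the first Helmholtz equation. Introducing $u := U_1 - U_2 \in H^1_0(\Om)$, the two PDEs together with the hypothesis $|n_1-n_2|>\de_1$ let me solve algebraically
\ben
U_2 = -\frac{(\Delta + k^2 n_1)(u + u_0)}{k^2(n_1-n_2)},
\enn
so that $w = U_2 \in L^2(\Om)$ as soon as $u \in \wid H^2_0(\Om)$ (by elliptic regularity, since $\pa\Om \in C^2$).

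Substituting the above into $(\Delta + k^2 n_2)U_2 = 0$, testing against $\ov\phi$ for $\phi \in \wid H^2_0(\Om)$, applying Green's identity (only the term involving $\pa_\nu\phi$ survives since $\phi|_{\pa\Om}=0$), and using the surviving boundary relation $U_2|_{\pa\Om} = -\pa_\nu u/\eta$ (from $\pa_\nu u + \eta U_2 = 0$, valid whenever $\eta \not\equiv 0$), I arrive at the sesquilinear formulation
\ben
a(u,\phi) := -\int_\Om \frac{(\Delta u + k^2 n_1 u)(\Delta \ov\phi + k^2 n_2 \ov\phi)}{k^2(n_1-n_2)}\,dx + \int_{\pa\Om} \frac{\pa_\nu u\, \ov{\pa_\nu \phi}}{\eta}\,ds = F(\phi),
\enn
where $F$ is antilinear with $|F(\phi)| \leq C\|u_0\|_{H^1_\Delta(\Om)}\|\phi\|_{H^2(\Om)}$. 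In cases 5--6 ($\eta \equiv 0$) one has $\pa_\nu u = 0$ on $\pa\Om$ as well, so the natural space collapses to $H^2_0(\Om)$ and the boundary integral disappears.

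The main obstacle is proving coercivity of $a$ (or of $-a$) on $\wid H^2_0(\Om)$ in each of the six regimes. With $A := \Delta u + k^2 n_1 u$ and $B := \Delta u + k^2 n_2 u$, the algebraic identities
\ben
A\ov B = |B|^2 + k^2(n_1-n_2)u\ov B = |A|^2 - k^2(n_1-n_2)A\ov u,
\enn
together with the Green identity $\int_\Om u\Delta\ov u\,dx = -\int_\Om |\na u|^2\,dx$ on $H^1_0(\Om)$, let me rewrite $\mathrm{Re}\,a(u,u)$ as a combination of $\int |A|^2$ or $\int |B|^2$, $\int |\na u|^2$, $\int n_i|u|^2$ and the boundary term. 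The sign of $n_1-n_2$ chooses which of the two identities makes the leading fourth-order integral positive, while the sign of $\eta$ fixes the sign of the boundary contribution; this accounts for the four sign regimes 1--4. Poincar\'e's inequality $\int_\Om |\na u|^2\,dx \geq \la_1(\Om)\|u\|^2_{L^2(\Om)}$ absorbs the negative $\int n_i|u|^2\,dx$ contribution, and a careful $\varepsilon$-weighted balancing of the remaining $|\Delta u|^2$, $|\na u|^2$ and $|u|^2$ terms yields the sharp threshold $k^2 < \la_1(\Om)\inf_\Om n_i/(\sup_\Om n_i)^2$ in cases 1, 2, 5, 6. In the mixed-sign cases 3--4 the boundary integral has the opposite sign relative to the leading volume term; Lemma \ref{lem3.2} then allows it to be absorbed via $\int_{\pa\Om}|\pa_\nu u|^2\,ds \leq C_1(\Om)\int_\Om|\Delta u|^2\,dx$, and the exact quantitative balance between $C_1(\Om)/\inf_{\pa\Om}|\eta|$ and $1/[k^2\sup_\Om|n_1-n_2|]$ produces precisely the inequality in the statement. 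Once coercivity is established, Lax--Milgram delivers a unique $u \in \wid H^2_0(\Om)$ with $\|u\|_{H^2(\Om)} \leq C\|u_0\|_{H^1_\Delta(\Om)}$, and $w := U_2$ together with $v := u + w + u_0$ recovers the desired $(v,w) \in L^2(\Om) \times L^2(\Om)$. $L^2$-uniqueness is automatic: any homogeneous $L^2$-solution $(v,w)$ produces $u := v-w \in L^2$ with $\Delta u \in L^2$ and $u|_{\pa\Om} = 0$ by the same algebraic relation, hence $u \in \wid H^2_0(\Om)$ and vanishes by coercivity applied with $F \equiv 0$, which in turn forces $v = w = 0$ via $|n_1-n_2|>\de_1$.
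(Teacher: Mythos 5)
Your proposal follows essentially the same route as the paper's own proof: reduce to the fourth-order problem for $u=v-w$, homogenize with the lifting $u_0$ to work in $\wid H^2_0(\Om)$ (resp.\ $H^2_0(\Om)$ when $\eta=0$), establish coercivity of the resulting sesquilinear form case by case via the Poincar\'e constant $\lambda_1(\Om)$ and the constant $C_1(\Om)$ of Lemma \ref{lem3.2}, then apply Lax--Milgram and recover $(v,w)$ from $u$ through the algebraic relation $w=(\Delta u+k^2n_1u)/[k^2(n_2-n_1)]$. Up to the harmless factor $k^2$ in the form and the order in which the shift by $u_0$ is performed, this is the paper's argument, and your uniqueness remark matches the paper's observation that $u=\pa_\nu u=0$ on $\pa\Om$ with $u\in H^1_\Delta(\Om)$ forces $u\in H^2_0(\Om)$.
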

\begin{proof}
  First we consider the case $\eta\neq0$ corresponding to conditions 1-4. Set $u=v-w$, then it can be verified that 
  \be\label{3.3}
  \left\{
  \begin{array}{ll}
    \displaystyle(\Delta+k^2n_2)\frac{1}{n_2-n_1}(\Delta+k^2n_1)u=0~~~&{\rm in}~\Om,\\
    u=f_1~~~&{\rm on}~\pa\Om,\\
    \displaystyle\pa_\nu u=f_2-\frac{\eta}{k^2(n_2-n_1)}(\Delta u+k^2n_1u)~~~&{\rm on}~\pa\Om.
  \end{array}
  \right.
  \en
  Define $u'=u-u_0\in\wid H^2_0(\Om)$. By Green's formula we get that the variational formulation of (\ref{3.3}) is to find a $u'\in\wid H^2_0(\Om)$ such that
  \ben
    \displaystyle B(u',\varphi)=F(\varphi)~~~\forall\varphi\in\wid H^2_0(\Om),
  \enn
  where
  \ben
    &&B(u',\varphi):=\displaystyle\int_{\Om}\frac{1}{n_2-n_1}(\Delta u'+k^2n_1u')(\Delta \ov\varphi+k^2n_2\ov\varphi)dx+\int_{\pa\Om}\frac{k^2}{\eta}\pa_\nu u'\pa_\nu\ov\varphi ds, \\
    &&F(\varphi):=\int_{\Om}\frac{1}{n_1-n_2}(\Delta u_0+k^2n_1u_0)(\Delta \ov\varphi+k^2n_2\ov\varphi)dx.
  \enn
  Clearly, by the H\"{o}lder's inequality and the trace theorem, we have that 
  $B(u',\varphi)$ and $F(\varphi)$ are both bounded on $\wid H_0^2(\Om)$. 

   Note that $\|u'\|_{H^2(\Om)}\leq C\|\Delta u'\|_{L^2(\Om)}$ for all $u'\in\wid H^2_0(\Om)$. Now for $(k,n_1,n_2,\eta,\Om)$ satisfying condition 1, we deduce the following
  \ben
    B(u',u')=&&\;\int_{\Om}\frac{1}{n_2-n_1}|\Delta u'+k^2n_2u'|^2dx+\int_{\pa\Om}\frac{k^2}{\eta}\left|\pa_\nu u'\right|^2ds+k^2\int_{\Om}|\nabla u'|^2dx\\ 
    &&\;-k^4\int_{\Om}n_2|u'|^2dx\\
    \geq&&\;\left(C'-\frac{C^{'2}}{\varepsilon_1}\right)\|\Delta u'\|_{L^2(\Om)}^2+\varepsilon_1\left(\frac{C'}{\varepsilon_1}\|\Delta u'\|_{L^2(\Om)}-k^2\|n_2u'\|_{L^2(\Om)}\right)^2\\ 
    &&\;+k^4\left(C'+\frac{\lambda_1(\Om)}{k^2(\sup_\Om n_2)^2}-\frac{1}{\inf_\Om n_2}-\varepsilon_1\right)\|n_2u'\|_{L^2(\Om)}^2 \\
    \geq&&\;C\|\Delta u'\|_{L^2(\Om)}^2
  \enn
  for all $u'\in\wid H^2_0(\Om)$ with the constant $\varepsilon_1$ satisfying that $0<\varepsilon_1-C'<\lambda_1(\Om)/[k^2(\sup_\Om n_2)^2]-1/\inf_\Om n_2$. Thus we obtain that $B(u',\varphi)$ is coercive on $\wid H^2_0(\Om)$ when $(k,n_1,n_2,\eta,\Om)$ satisfies condition 1. Similar calculation also yields that $B(u',\varphi)$ is coercive with condition 2.

  Under condition 3, we choose a positive constant $\varepsilon_2$ such that $C'C_1(\Om)k^2/(C'\inf_\Om|\eta|-C_1(\Om)k^2)<\varepsilon_2<\lambda_1(\Om)/[k^2(\sup_\Om n_1)^2]-1/\inf_\Om n_1$. Then for all $u'\in\wid H^2_0(\Om)$ we derive 
  \ben
    -B(u',u')=&&\;\int_{\Om}\frac{1}{n_1-n_2}|\Delta u'+k^2n_1u'|^2dx+k^2\int_{\Om}|\nabla u'|^2dx-k^4\int_{\Om}n_1|u'|^2dx\\ 
    &&\;-\int_{\pa\Om}\frac{k^2}{\eta}\left|\pa_\nu u'\right|^2ds\\ 
    \geq&&\;\left(C'-\frac{k^2C_1(\Om)}{\inf_\Om|\eta|}-\frac{C^{'2}}{\varepsilon_2+C'}\right)\|\Delta u'\|_{L^2(\Om)}^2\\
    &&\;+(\varepsilon_2+C')\left(\frac{C'}{\varepsilon_2+C'}\|\Delta u'\|_{L^2(\Om)}-k^2\|n_1u'\|_{L^2(\Om)}\right)^2\\ 
    &&\;+k^4\left(\frac{\lambda_1(\Om)}{k^2(\sup_\Om n_1)^2}-\frac{1}{\inf_\Om n_1}-\varepsilon_2\right)\|n_1u'\|_{L^2(\Om)}^2 \\
    \geq&&\;C\|\Delta u'\|_{L^2(\Om)}^2. 
  \enn
  The coercivity of $B(u',\varphi)$ under condition 4 follows analogously as above.

  Then we consider the case $\eta=0$, which corresponds to conditions 5 and 6. We see that $u=v-w$ satisfies
  \be\label{3.4}
  \left\{
  \begin{array}{ll}
    \displaystyle(\Delta+k^2n_2)\frac{1}{n_2-n_1}(\Delta+k^2n_1)u=0~~~&{\rm in}~\Om,\\[1mm]
    u=f_1~~~&{\rm on}~\pa\Om,\\[1mm]
    \pa_\nu u=f_2~~~&{\rm on}~\pa\Om.
  \end{array}
  \right.
  \en
  Set $u'=u-u_0\in H_0^2(\Om)$. Also we can get the variational problem of (\ref{3.4}): finding $u'\in H_0^2(\Om)$ such that
  \ben
    \wid B(u',\varphi)=F(\varphi)~~~\forall\varphi\in H^2_0(\Om),
  \enn
  where
  \ben
    \wid B(u',\varphi):=\displaystyle\int_{\Om}\frac{1}{n_2-n_1}(\Delta u'+k^2n_1u')(\Delta \ov\varphi+k^2n_2\ov\varphi)dx.
  \enn
  Obviously, $\wid B(u',\varphi)$ is still bounded on $H_0^2(\Om)$. Following the previous calculation closely, it can be shown that $\wid B(u',\varphi)$ is coercive under conditions 5 and 6.

  Now by the Lax-Milgram theorem, with conditions 1-6, there exists a unique solution $u'\in\wid H_0^2(\Om)$ or $H_0^2(\Om)$ such that
  \ben
    \|u'\|_{H^2(\Om)}\leq C\|u_0\|_{H^1_\Delta(\Om)}.
  \enn
  Define $v=[1/k^2(n_2-n_1)](\Delta u+k^2n_2u)$ and $w=v-u$. Then it can be verified that $(v,w)\in L^2(\Om)\times L^2(\Om)$ is the unique solution to problem (\ref{3.2}) and the desired a priori estimate holds. 
\end{proof}
\begin{remark}\label{remark3.4}
  Since $\lambda_1(\Om)\rightarrow\infty$ and $C_1(\Om)\rightarrow0$ as $diam(\Om)\rightarrow0$, we see that conditions {\rm 1-6} will hold for fixed $k,n_1,n_2$ and $\eta$ with $\Om$ sufficiently small. In other words, problem {\rm (3.2)} is always well-posed in small domains. 
\end{remark}

\subsection{Uniqueness of the inverse problem}\label{sec3.2}
\setcounter{equation}{0}
  With all the preceding preparations, we are ready to prove the uniqueness theorem of the inverse problem. To this end, we first give an assumption for some functions $f$ in $D$.
\begin{assumption}\label{asm4.1}
  There exists an open neighborhood $O_2(\pa D)$ of $\pa D$, $O_2(\pa D)\subset\subset D\setminus\ov D_b$, and a constant $\varepsilon>0$ such that $f(x)>1+\varepsilon$ or $\varepsilon<f(x)<1-\varepsilon$ for a.e. $x\in O_2(\pa D)$.
\end{assumption}
\begin{theorem}\label{thm4.2}
  Suppose $(D_1,\lambda_1,\gamma_1,n_1,D_b^1)$ and $(D_2,\lambda_2,\gamma_2,n_2,D_b^2)$ are two scatters. Denote by $u_\infty^m(\hat x;d)$ the far-field pattern of the scattered field $u^s_m(x;d)$ to transmission problem {\rm (\ref{1.1})} corresponding to the scatter $(D_m,D_b^m,n_m,\lambda_m,\gamma_m)$ with the incident wave $u^i(x)=e^{ikx\cdot d}$, $d\in\Sp^2$, $m=1,2$. Assume that $u_\infty^1(\hat x;d)=u_\infty^2(\hat x;d)$ for all $\hat x,d\in\Sp^2$. \\
  {\rm (i)}If Assumption {\rm \ref{asm4.1}} holds for $\lambda_m$ in $D_m$ with $m=1,2$, then $D_1=D_2=D$ and $\lambda_1=\lambda_2$ on $\pa D$. Suppose in addition that $\lambda_m\in C^2(\ov D_m\setminus D_b^m)$ and $\gamma_m\in C(\pa D_m)$, $m=1,2$, then $(\pa_\nu\lambda_1-\pa_\nu\lambda_2)+2(\gamma_2-\gamma_1)=0$ on $\pa D$. Define
  $$a_m=\frac{\Delta\sqrt{\lambda_m}}{\sqrt{\lambda_m}}-\frac{k^2n_m}{\lambda_m},~~m=1,2. $$
  If further $a_m\in C^1(\ov D_m\setminus D_b^m)$, $m=1,2$, then $a_1=a_2$ on $\pa D$.
  \\
  {\rm (ii)}If $\lambda_m=1$ in $O_1(\pa D_m)$, $\gamma_m\in C^{0,\alpha_1}(\pa D_m)$ is real-valued and $n_m$ satisfies Assumption {\rm \ref{asm4.1}} in $D_m$ with $m=1,2$, then $D_1=D_2=D$ and $\gamma_1=\gamma_2$ on $\pa D$. Further suppose that $n_m\in C^2(\ov D_m\setminus D_b^m)$ is real-valued, $m=1,2$, then $D^\alpha n_1=D^\alpha n_2$ on $\pa D$ for $|\alpha|\leq2$.
\end{theorem}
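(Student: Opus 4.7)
The strategy is the one previewed in the introduction: combine the sharp singularity descriptions of Theorems~\ref{thm2.8} and~\ref{thm2.7} with the small-domain well-posedness of the interior transmission problems of Section~\ref{sec3.1}. First I would upgrade the hypothesis $u_\infty^1=u_\infty^2$ on $\Sp^2\times\Sp^2$ to equality of scattered fields for the point-source and hypersingular point-source incidences used in Theorems~\ref{thm2.8}--\ref{thm2.7}: Rellich's lemma and analytic continuation give $u_1^s=u_2^s$ in the unbounded component $G$ of $\R^3\se\ov{D_1\cup D_2}$, and the usual Herglotz density plus mixed reciprocity upgrade this to $u_j^i(x)=\Phi(x,x_j)$ and $u_j^i(x)=\na_x\Phi(x,x_j)\cdot\vec a$ for any $x_j\in G$ and any $\vec a$.

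To prove $D_1=D_2$, suppose not. After possibly swapping indices, pick $x_0\in\pa D_1$ with $x_0\notin\ov{D_2}$ and $\nu(x_0)$ pointing into $G$, and fix a ball $B=B_\rho(x_0)$ small enough that $\ov B\cap(\ov{D_2}\cup\ov{D_b^1})=\emptyset$. With $x_j=x_0+(\de/j)\nu(x_0)$, the source $x_j$ stays at positive distance from $\ov{D_2}$, so standard regularity gives $u^s_{2,j}\in C^\infty(\ov B)$ uniformly in $j$. Combining $u_1=u_2$ in $G$ with the transmission conditions on $\pa D_1$ yields $v_{1,j}=u_{2,j}$ and $\la_1\pa_\nu v_{1,j}+\g_1 v_{1,j}=\pa_\nu u_{2,j}$ on $\pa D_1\cap B$. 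For case~(i) I take $u_j^i=\Phi(\cdot,x_j)$ and apply Theorem~\ref{thm2.8}; subtracting the known leading singularities and tracing onto $\pa D_1\cap B$ shows that $(2/(\la_1(x_0)+1)-1)\,\Phi_0(\cdot,x_j)|_{\pa D_1\cap B}$ is uniformly bounded in $H^{1/2}$, which contradicts $\|\Phi_0(\cdot,x_j)\|_{L^2(\pa D_1\cap B)}\to\infty$ unless $\la_1(x_0)=1$---excluded by Assumption~\ref{asm4.1}. For case~(ii) the trace argument alone degenerates at points where $\g_1(x_0)=0$, and here the small-domain well-posedness of Theorem~\ref{thm3.3} (see Remark~\ref{remark3.4}) becomes indispensable: shrink $B$, multiply $v_{1,j}$ and $u_{2,j}$ by a cut-off $\chi\in C_c^\infty(B)$ equal to $1$ near $x_0$ and vanishing with its gradient on $\pa B$, verify that $(\chi v_{1,j},\chi u_{2,j})$ satisfies a problem of the form~\eqref{3.2} in $\Om=B\cap D_1$ with $L^2$-sources and $H^{1/2}/H^{-1/2}$-data that are uniformly bounded in $j$, and conclude from Theorem~\ref{thm3.3} that $\|\chi v_{1,j}\|_{L^2(\Om)}$ must be uniformly bounded---contradicting the unbounded $L^2$-norm of the hypersingular incident wave $u_j^i(\cdot)=\na_x\Phi(\cdot,x_j)\cdot\nu(x_0)$ that is built into $v_{1,j}$ by Theorem~\ref{thm2.7}.

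With $D_1=D_2=:D$ in hand, the boundary identifications follow by matching successive singular terms at an arbitrary $x_0\in\pa D$. In case~(i), the transmission conditions now give $v_{1,j}=v_{2,j}$ and $\la_1\pa_\nu v_{1,j}+\g_1 v_{1,j}=\la_2\pa_\nu v_{2,j}+\g_2 v_{2,j}$ on $\pa D$; equating the $\Phi_0$-coefficients on the trace yields $2/(\la_1(x_0)+1)=2/(\la_2(x_0)+1)$, hence $\la_1=\la_2$ on $\pa D$. Substituting back into the second transmission identity and comparing the next-order singular coefficients in $\la_1(\pa_\nu v_{1,j}-\pa_\nu v_{2,j})=(\g_2-\g_1)v_{1,j}$ gives $(\pa_\nu\la_1-\pa_\nu\la_2)+2(\g_2-\g_1)=0$. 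Finally, the Liouville substitution $w=\sqrt{\la}\,v$ reduces the interior equation to $\Delta w-aw=0$, and a third-order singular match yields $a_1=a_2$ on $\pa D$. In case~(ii), $\la\equiv1$ near $\pa D$, and Theorem~\ref{thm2.7} pins down $\g_1(x_0)=\g_2(x_0)$ via the $\Phi_0$-coefficient; the identities $D^\al n_1=D^\al n_2$ for $|\al|\le2$ on $\pa D$ follow by repeatedly differentiating the Helmholtz equations tangentially on $\pa D$, using the transmission conditions and the full hierarchy of singular expansions obtained from both point and hypersingular-point-source incidences.

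The main obstacle is the case~(ii) contradiction: the one-line trace argument that works in case~(i) genuinely breaks down when $\g_1(x_0)=0$, and one is forced to invoke the delicate $L^2$-well-posedness of~\eqref{3.2}. The verification that the cut-off pair $(\chi v_{1,j},\chi u_{2,j})$ lives in the framework of~\eqref{3.2} with \emph{uniformly} bounded data in $j$ is where all the estimates from Theorem~\ref{thm2.7} have to be paid in. A secondary technical difficulty is the higher-order boundary identification of $n$ (up to second derivatives), which requires sharper asymptotic expansions of $v_{m,j}$ than the mere $H^1$-remainders recorded in Theorems~\ref{thm2.8}--\ref{thm2.7}.
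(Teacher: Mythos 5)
Your overall strategy (Rellich plus mixed reciprocity, then the singularity estimates of Theorems \ref{thm2.8}--\ref{thm2.7} played against a small-domain interior transmission problem) matches the paper, and parts of your plan are fine or even shorter than the paper's: for $D_1=D_2$ and $\lambda_1=\lambda_2$ on $\pa D$ in case (i) (and for $\gamma_1=\gamma_2$ in case (ii)) your trace-matching of the leading $\Phi_0$-coefficient on $\Gamma=\pa D_1\cap B$ is a legitimate shortcut, whereas the paper instead runs the contradiction through Lemma \ref{lem3.1}. Two caveats on case (ii), though. First, your cut-off pair $(\chi v_{1,j},\chi u_{2,j})$ does \emph{not} satisfy the homogeneous Helmholtz equations in \eqref{3.2}: multiplying by $\chi$ creates commutator sources $2\nabla\chi\cdot\nabla(\cdot)+\Delta\chi\,(\cdot)$, and Theorem \ref{thm3.3} as stated has no interior source terms. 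The paper avoids this by keeping the original pair $(\wid v_{1,j},\wid u_{2,j})$, whose data $f_1^j,f_2^j$ vanish on $\Gamma$, and constructing an $H^1_\Delta$ lifting $u_0^j=(1-\chi_1)(\wid v_{1,j}-\wid u_{2,j}+\chi_2\wid u_{2,j})$; you would either need to redo Theorem \ref{thm3.3} with $L^2$ right-hand sides or adopt such a lifting. Second, Theorem \ref{thm3.3} requires $\eta$ to vanish or have a definite sign; the paper therefore first selects $\Gamma$ so that either $\gamma_1=0$ or $|\gamma_1|>\delta_0$ there, a selection your plan omits.

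The genuine gap is in the boundary identification of the derivatives: $(\pa_\nu\lambda_1-\pa_\nu\lambda_2)+2(\gamma_2-\gamma_1)=0$, $a_1=a_2$, and $D^\alpha n_1=D^\alpha n_2$ for $|\alpha|\le 2$. You propose to "compare next-order singular coefficients" and perform a "third-order singular match", but no such higher-order expansions exist: Theorems \ref{thm2.8} and \ref{thm2.7} give only the leading singular term with an $H^1$-bounded remainder, and in particular the normal derivatives $\pa_\nu v_{m,j}$ on $\pa D$ (needed for your identity $\lambda_1(\pa_\nu v_{1,j}-\pa_\nu v_{2,j})=(\gamma_2-\gamma_1)v_{2,j}$) are not individually controlled beyond $H^{-1/2}$ with no asymptotics. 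You acknowledge this yourself in your closing paragraph, which is precisely an admission that this part of the argument is not in place. The paper's mechanism is different and does not need sharper expansions: after the Liouville substitution it applies Green's identities over the small domain $D_0$ (resp.\ $\Omega'$), arranging the boundary terms so that the unknown difference multiplies a quadratic singular quantity, e.g.\ $\int_\Gamma(\pa_\nu\lambda_1-\pa_\nu\lambda_2+2(\gamma_2-\gamma_1))|v_{2,j}|^2ds$ in \eqref{3.5a} or $\int_{\Omega'}(\pa_in_2-\pa_in_1)\wid v_{1,j}^2dx$ in \eqref{4.14}, while all remaining terms are shown uniformly bounded using the $H^1$ estimates, an interior $H^2$ bound for the difference $\wid v_{1,j}-\wid v_{2,j}$ (possible because the difference satisfies a single equation with $L^2$-bounded source once $n_1=n_2$ on $\Gamma$), and the blow-up rates $\int|\nabla\Phi_0\cdot\nu|^2\gtrsim j$, $\int|x-x_0||\nabla\Phi_0\cdot\nu|^2\gtrsim\ln j$. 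Without this integration-by-parts device (or a genuinely new higher-order expansion of $v_{m,j}$, which you would have to prove), the derivative identifications in both (i) and (ii) remain unproved in your proposal.
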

\begin{proof}
  (i)Suppose $D_1\neq D_2$. Denote by $G$ the unbounded connected part of $\R^3\setminus\overline{(D_1\cup D_2)}$. Then we can find a point $x_0\in\R^3$ and a small ball $B$ centered at $x_0$ such that $x_0\in\pa G\cap\pa D_1$, $x_0\notin\pa D_2$ and $B\cap (\overline{D_2\cup D_b^1})=\emptyset$. Define
  \ben
    x_j:=x_0+\frac{\delta}{j}n(x_0),~~~j=1,2,\cdots
  \enn
  with $\delta>0$ small enough such that $x_j\in B$ for all $j\in\N$. Since $\pa D_1\in C^{2,\alpha_0}$, we can further find a small $C^{2,\alpha_0}$ domain $D_0$ such that $B\cap D_1\subset D_0\subset (D_1\setminus(\overline{D_2\cup D_b^1}))\cap O_2(\pa D_1)$.

  Denote by $(u^s_{m}(x;y),v_{m}(x;y))$ the unique solution of problem (\ref{1.1}) corresponding to the scatterer $(D_m,\lambda_m,\gamma_m,n_m,D_b^m)$ and the incident wave $u^i=\Phi(x,y)$ with $m=1,2$ and $y\in G$. Let $u_\infty^m(\hat x;y)$ be the far-field pattern of $u_m^s(x;y)$. Then by Rellich's Lemma, we have $u^s_1(x;d)=u^s_2(x;d)$, $\forall x\in\ov G$, $d\in\Sp^2$. Arguing similarly as in the proof of \cite[Theorem 3.16]{DR13}, we can obtain the mixed reciprocity relation $4\pi u_\infty^m(-d; x)=u^s_m( x;d)$, $\forall x\in\ov G$, $d\in\Sp^2$ and $m=1,2$. Thus we further have $u_\infty^1(\hat x;x_j)=u_\infty^2(\hat x;x_j)$, $\forall\hat x\in\Sp^2$ and $j\in\N$, which implies that $u^s_1(x;x_j)=u^s_2(x;x_j)$, $\forall x\in\ov G$ and $j\in\N$.

  Hereafter, we set $u_{m,j}^s=u_m^s(x;x_j)$, $v_{m,j}=v_m(x;x_j)$ and $u_{m,j}=\Phi(\cdot,x_j)+u_{m,j}^s$ with $m=1,2$. It can be verified that $(v_{1,j},u_{2,j})$ solves the following interior transmission problem
  \be\label{4.1}
  \left\{
  \begin{array}{ll}
    \dive(\lambda_1\na v_{1,j})-b_1v_{1,j}=g_{1,j}~~~&{\rm in}~D_0,\\
    \Delta u_{2,j}-b_2u_{2,j}=g_{2,j}~~~&{\rm in}~D_0,\\
    v_{1,j}-u_{2,j}=h_{1,j}~~~&{\rm on}~\pa D_0,\\
    \lambda_1\pa_\nu v_{1,j}-\pa_\nu u_{2,j}=h_{2,j}~~~&{\rm on}~\pa D_0,
  \end{array}
  \right.
  \en
  where
  \ben
    &&g_{1,j}=-(k^2n_1+b_1)v_{1,j},\quad g_{2,j}=-(k^2+b_2)u_{2,j}, \\
    &&h_{1,j}=(v_{1,j}-u_{2,j})|_{\pa D_0},\quad\; h_{2,j}=\left.\left(\displaystyle\lambda_1\pa_\nu v_{1,j}-\pa_\nu u_{2,j}\right)\right|_{\pa D_0},
  \enn
  and $b_1$, $b_2$ are positive constants to be chosen such that the conditions in Lemma \ref{lem3.1} are satisfied. We note that such $b_1,b_2$ exist because Assumption \ref{asm4.1} holds for $\lambda_1$. Clearly, $h_{1,j}=0$ and $h_{2,j}=-\gamma_1u_{2,j}$ on $\Gamma:=\pa D_0\cap\pa D_1$. By Theorem \ref{thm2.8}, we have that $g_{1,j}$ are uniformly bounded in $L^2(D_0)$ for $j\in\N$. Due to the positive distance between $x_j$ and $D_2$, it follows that $\|u_{2,j}^s\|_{H^1(D_0)}\leq C$ from the well-posedness of problem (\ref{1.1}), which indicates that $g_{2,j}\in L^2(D_0)$ are uniformly bounded for $j\in\N$ since $u^i_j=\Phi(x,x_j)\in L^2(D_0)$ are uniformly bounded.

  Next we show that $h_{1,j}$ and $h_{2,j}$ are uniformly bounded in $H^{1/2}(\pa D_0)$ and $H^{-1/2}(\pa D_0)$ for $j\in\N$, respectively. By Theorem \ref{thm2.8}, we see that
  \ben
    \|h_{1,j}\|_{H^{1/2}(\pa D_0)}\;&&=\|v_{1,j}-u_{2,j}\|_{H^{1/2}(\pa D_0\setminus\Gamma)}\\
    &&\leq C(\|v_{1,j}\|_{H^1(D_0\setminus\ov B)}+\|u_{2,j}^s\|_{H^1(D_0)}+\|\Phi(\cdot,x_j)\|_{H^1(D_0\setminus\ov B)})\\
    &&\leq C
  \enn
  uniformly for $j\in\N$. Further, it is derived that 
  \ben
    \|h_{2,j}\|_{H^{-1/2}(\pa D_0)}\;&&\leq \left(\|h_{2,j}\|_{H^{-1/2}(\pa D_0\setminus\Gamma)}+\|\gamma_1 u_{2,j}\|_{H^{-1/2}(\Gamma)}\right)\\
    &&\leq C\left(\|v_{1,j}\|_{H^1(D_0\setminus\ov B)}+\|u_{2,j}^s\|_{H^1(D_0)}+\|\Phi(\cdot,x_j)\|_{H^1(D_0\setminus\ov B)}\right.\\
    &&\quad\qquad\left.+\|u_{2,j}\|_{H^{-1/2}(\Gamma)}\right)\\
    &&\leq C\left(\|v_{1,j}\|_{H^1(D_0\setminus\ov B)}+\|u_{2,j}^s\|_{H^1(D_0)}+\|\Phi(\cdot,x_j)\|_{H^1(D_0\setminus\ov B)}\right.\\
    &&\quad\qquad\left.+\|\Phi(\cdot,x_j)\|_{L^p(\Gamma)}\right)\\
    &&\leq C
  \enn
  with $4/3<p<2$ due to the fact that $L^p(\Gamma)$ is bounded embedded into $H^{-1/2}(\Gamma)$ for $p>4/3$ and $\Phi(\cdot,x_j)$ are uniformly bounded in $L^p(\Gamma)$ for $p<2$.

  Therefore, we obtain that $\|u_{2,j}\|_{H^1(D_0)}\leq C$ by Lemma \ref{lem3.1}, which is however a contradiction since  $\|u_{2,j}^s\|_{H^1(D_0)}\leq C$ and $\|\Phi(x,x_j)\|_{H^1(D_0)}\rightarrow\infty$ as $j\rightarrow\infty$. Thus $D_1=D_2=D$.

  Now we show $\lambda_1=\lambda_2$ on $\pa D$. Suppose not, then we can assume that $\lambda_1/\lambda_2>1$ in $\ov D_0$. It is deduced that  $(v_{1,j},v_{2,j})$ is the solution to the following problem
  \be\label{4.3}
  \left\{
  \begin{array}{ll}
    \dive(\lambda_1\na v_{1,j})-b_1v_{1,j}=\wid g_{1,j}~~~&{\rm in}~D_0,\\
    \dive(\lambda_2\na v_{2,j})-b_2v_{2,j}=\wid g_{2,j}~~~&{\rm in}~D_0,\\
    v_{1,j}-v_{2,j}=\wid h_{1,j}~~~&{\rm on}~\pa D_0,\\
    \lambda_1\pa_\nu v_{1,j}-\lambda_2\pa_\nu v_{2,j}=\wid h_{2,j}~~~&{\rm on}~\pa D_0,
  \end{array}
  \right.
  \en
  where
  \ben
    &&\wid g_{1,j}=-(k^2n_1+b_1)v_{1,j},\quad \wid g_{2,j}=-(k^2n_2+b_2)v_{2,j}, \\
    &&\wid h_{1,j}=(v_{1,j}-v_{2,j})|_{\pa D_0},\quad\;\wid h_{2,j}=\left.\left(\displaystyle\lambda_1\pa_\nu v_{1,j}-\lambda_2\pa_\nu v_{2,j}\right)\right|_{\pa D_0},
  \enn
  and also $b_1$, $b_2$ are positive constants satisfying the conditions in Lemma \ref{lem3.1}. We see that $\wid h_{1,j}=0$ and $\displaystyle\wid h_{2,j}=(\gamma_2-\gamma_1)v_{2,j}$ on $\Gamma$. By Theorem \ref{thm2.8} we obtain that $\wid g_{1,j}$ and $g_{2,j}$ are uniformly bounded in $L^2(D_0)$. Further, it can be derived from Theorem \ref{thm2.8} that 
  \ben
    \|\wid h_{1,j}\|_{H^{1/2}(\pa D_0)}\;&&= \|v_{1,j}-v_{2,j}\|_{H^{1/2}(\pa D_0\setminus\Gamma)}\\
    &&\leq \|v_{1,j}\|_{H^{1/2}(\pa D_0\setminus\Gamma)}+\|v_{2,j}\|_{H^{1/2}(\pa D_0\setminus\Gamma)}\\
    &&\leq C\left(\|v_{1,j}\|_{H^1(D_0\setminus\ov B)}+\|v_{2,j}\|_{H^1(D_0\setminus\ov B)}\right)\\
    &&\leq C
  \enn
  and
  \ben
    \|\wid h_{2,j}\|_{H^{-1/2}(\pa D_0)}\;&&\leq \|\wid h_{2,j}\|_{H^{-1/2}(\pa D_0\setminus\Gamma)}+\left\|(\gamma_2-\gamma_1)v_{2,j}\right\|_{H^{-1/2}(\Gamma)}\\
    &&\leq C\left(\|v_{1,j}\|_{H^1(D_0\setminus\ov B)}+\|v_{2,j}\|_{H^1(D_0\setminus\ov B)}+\|v_{2,j}\|_{L^p(\Gamma)}\right)\\
    &&\leq C.
  \enn
  Hence, by Lemma \ref{lem3.1} we obtain $\|v_{2,j}\|_{H^1(D_0)}\leq C$ uniformly for $j\in\N$, which contradicts with
  \be\label{4.4}
    \left\|v_{2,j}-\frac{2}{\lambda_2(x_0)+1}\Phi(x,x_j)\right\|_{H^{1}(D\setminus\ov D_b)}\leq C,
  \en
  Thus we get $\lambda_1=\lambda_2$ on $\pa D$.

  Now if  $\left(\pa_\nu\lambda_1-\pa_\nu\lambda_2\right)+2(\gamma_2-\gamma_1)\neq0$ on $\pa D$, we can assume that  $\left(\pa_\nu\lambda_1-\pa_\nu\lambda_2\right)+2(\gamma_2-\gamma_1)>0$ on $\Gamma$. (Note that if $\gamma$ has a imaginary part, we just consider the real or imaginary part of $\left(\pa_\nu\lambda_1-\pa_\nu\lambda_2\right)+2(\gamma_2-\gamma_1)$.) Then we see that $(w_{1,j},w_{2,j})=(\sqrt{\lambda_1}v_{1,j},\sqrt{\lambda_2}v_{2,j})$ satisfies 
  \be\label{3.4a}
  \left\{
  \begin{array}{ll}
    \Delta w_{1,j}-a_1w_{1,j}=0~~~&{\rm in}~D_0,\\
    \Delta w_{2,j}-a_2w_{2,j}=0~~~&{\rm in}~D_0,\\
    w_{1,j}-w_{2,j}=0~~~&{\rm on}~\Gamma,\\
    \displaystyle\pa_\nu w_{1,j}-\pa_\nu w_{2,j}=\left(\pa_\nu\sqrt\lambda_1-\pa_\nu\sqrt\lambda_2\right)v_{2,j}+\frac{\gamma_2-\gamma_1}{\sqrt\lambda_1}v_{2,j}~~~&{\rm on}~\Gamma.
  \end{array}
  \right.
  \en
  Integration by parts yields that 
  \ben
    &&\int_{\pa D_0}\pa_\nu w_{1,j}\ov w_{2,j}ds-\int_{D_0}\na w_{1,j}\cdot\na\ov w_{2,j}dx-\int_{D_0}a_1w_{1,j}\ov w_{2,j}dx=0, \\
    &&\int_{\pa D_0}\pa_\nu w_{2,j}\ov w_{1,j}ds-\int_{D_0}\na w_{2,j}\cdot\na\ov w_{1,j}dx-\int_{D_0}a_2w_{2,j}\ov w_{1,j}dx=0,
  \enn
  which leads to 
  \ben
    \int_{\pa D_0}\left(\pa_\nu w_{1,j}\ov w_{2,j}-\pa_\nu\ov w_{2,j}w_{1,j}\right)ds+\int_{D_0}(\ov a_2-a_1)w_{1,j}\ov w_{2,j}dx=0.
  \enn
  Utilizing the boundary conditions in (\ref{3.4a}), we obtain 
  \be\label{3.5a}\nonumber
    \frac{1}{2}\int_{\Gamma}\left(\pa_\nu\lambda_1-\pa_\nu\lambda_2+2(\gamma_2-\gamma_1)\right)|v_{2,j}|^2ds=&&\;\int_{\Gamma}\left(\pa_\nu\ov w_{2,j}w_{2,j}-\pa_\nu w_{2,j}\ov w_{2,j}\right)ds\\ \nonumber
    &&\;+\int_{\pa D_0\setminus\Gamma}\left(\pa_\nu\ov w_{2,j}w_{1,j}-\pa_\nu w_{1,j}\ov w_{2,j}\right)ds\\
    &&\;+\int_{D_0}(a_1-\ov a_2)w_{1,j}\ov w_{2,j}dx.
  \en
  Further, it can be deduced that 
  \be\label{3.6a}\nonumber
    \int_{\Gamma}\left(\pa_\nu\ov w_{2,j}w_{2,j}-\pa_\nu w_{2,j}\ov w_{2,j}\right)ds=&&\;\left(\int_{\pa D_0}-\int_{\pa D_0\setminus\Gamma}\right)\left(\pa_\nu\ov w_{2,j}w_{2,j}-\pa_\nu w_{2,j}\ov w_{2,j}\right)ds\\ \nonumber
    =&&\;-\int_{\pa D_0\setminus\Gamma}\left(\pa_\nu\ov w_{2,j}w_{2,j}-\pa_\nu w_{2,j}\ov w_{2,j}\right)ds\\
    &&\;+\int_{D_0}(\ov a_2-a_2)|w_{2,j}|^2dx,
  \en
  which indicates that the right hand side of (\ref{3.5a}) is bounded for all $j\in\N$ by Theorem \ref{thm2.8}. Thus we get $\|v_{2,j}\|_{L^2(\Gamma)}\leq C$ uniformly for $j\in\N$ from the assumption, which is a contradiction since $\|\Phi_0(x,x_j)\|_{L^2(\Gamma)}\rightarrow\infty$ as $j\rightarrow\infty$. Therefore,  $\left(\pa_\nu\lambda_1-\pa_\nu\lambda_2\right)+2(\gamma_2-\gamma_1)=0$ on $\pa D$. We leave our proof for $a_1=a_2$ on $\pa D$ to the last. 
  
  (ii)Suppose $D_1\neq D_2$. Since $\gamma_1\in C(\ov D_1)$, we can find a small $C^{2,\alpha_0}$ smooth domain $D_0\subset (D_1\setminus(\overline{D_2\cup D_b^1}))\cap O_1(\pa D_1)\cap O_2(\pa D_1)$ such that $\Gamma:=\pa D_0\cap\pa D_1\neq\emptyset$ is an open subset of $\pa D_1$ and one of the following is satisfied \\
  (a)$\gamma_1=0$ on $\Gamma$, \\
  (b)$\gamma_1>\delta_0$ or $\gamma_1<-\delta_0$ on $\Gamma$ for some positive constant $\delta_0$. \\
  Then there exists a point $x_0\in \Gamma$ and a small ball $B$ centered at $x_0$ such that $D_1\cap B\subset D_0$. Define
  \ben
    x_j:=x_0+\frac{\delta}{j}n(x_0),~~~j=1,2,\cdots
  \enn
  with $\delta>0$ small enough such that $x_j\in B$ for all $j\in\N$. Still denote by $G$ the unbounded connected part of $\R^3\setminus\overline{(D_1\cup D_2)}$. Moreover, by Remark \ref{remark3.4} we can let $D_0$ be sufficiently small such that $(k,n_1,1,\eta,D_0)$ satisfies one of the conditions 1-6 in Theorem \ref{thm3.3}, where $\eta=0$ in case (a) and $\eta=\gamma_1$ in case (b).

  Now consider the incident wave $\wid u^i_j=\na_x\Phi(x,x_j)\cdot\nu(x_0)$. Let  $(\wid u^s_{m}(x;x_j),\wid v_{m}(x;x_j))$ be the unique solution of the transmission problem (\ref{1.1}) corresponding to the scatter $(D_m,\lambda_m,\gamma_m,n_m,D_b^m)$ with the incident wave $u^i_j$, $m=1,2$. Similarly as in the proof of \cite[Theorem 3.1]{FJ18}, by Rellich's Lemma we have $\wid u^s_1(x;x_j)=\wid u^s_2(x;x_j)$, $\forall x\in\ov G$ and $j\in\N$.

  Set $\wid u_{m,j}^s=\wid u_m^s(x;x_j)$, $\wid v_{m,j}=\wid v_m(x;x_j)$ and $\wid u_{m,j}=\wid u^i_j+\wid u_{m,j}^s$ with $m=1,2$. We see that $(\wid v_{1,j},\wid u_{2,j})$ solves the following interior transmission problem
  \be\label{4.7}
  \left\{
  \begin{array}{ll}
    \Delta\wid v_{1,j}+k^2n_1\wid v_{1,j}=0~~~&{\rm in}~D_0,\\
    \Delta\wid u_{2,j}+k^2\wid u_{2,j}=0~~~&{\rm in}~D_0,\\
    \wid v_{1,j}-\wid u_{2,j}=f_1^j~~~&{\rm on}~\pa D_0,\\
    \pa_\nu\wid v_{1,j}+\eta\wid u_{2,j}-\pa_\nu\wid u_{2,j}=f_2^j~~~&{\rm on}~\pa D_0
  \end{array}
  \right.
  \en
  with the boundary data
  \ben
    f_1^j=(\wid v_{1,j}-\wid u_{2,j})|_{\pa D_0},~~~f_2^j=\left.\left(\displaystyle\pa_\nu\wid v_{1,j}+\eta\wid u_{2,j}-\pa_\nu\wid u_{2,j}\right)\right|_{\pa D_0}.
  \enn
  Clearly, we have $f_1^j=f_2^j=0$ on $\Gamma$. By the trace theorem, there exists a function $\chi_2\in H^2(D_0)$ such that $\chi_2=0$ and $\pa\chi_2/\pa\nu=\eta$ on $\pa D_0$. Define $u_0^j=(1-\chi_1)(\wid v_{1,j}-\wid u_{2,j}+\chi_2\wid u_{2,j})$ with
  $\chi_1\in C_0^\infty(\R^3)$ and 
  \begin{align*}
    \chi_1(x)=\left\{
    \begin{array}{ll}
      0, & {\rm in}~\R^3\setminus\ov B, \\
      1, & {\rm in}~B_0,
    \end{array}
    \right.
  \end{align*}
  where $B_0$ is also a small ball centered at $x_0$ such that $\ov B_0\subset B$. Then we see that $u_0^j\in H^1_\Delta(D_0)$ with $u_0^j=f_1^j$ and $\pa u_0^j/\pa\nu=f_2^j$ on $\pa D_0$. Thus by Theorem \ref{thm3.3}, it is obtained that 
  \ben
    \|\wid v_{1,j}\|_{L^2(D_0)}+\|\wid u_{2,j}\|_{L^2(D_0)}\leq C\|u_0^j\|_{H^1_\Delta(D_0)}.
  \enn

  We claim that $\|u_0^j\|_{H^1_\Delta(D_0)}\leq C$ uniformly for $j\in\N$. To this end, first we have $\|\wid u_{2,j}^s\|_{H^1_\Delta(D_0)}\leq C$ because of the well-posedness of problem (\ref{1.1}) and the positive distance between $x_j$ and $D_2$. Then by Theorem \ref{thm2.7} we derive the following
  \ben
    \|u_0^j\|_{H^1(D_0)}\;&&=\|u_0^j\|_{H^1(D_0\setminus\ov B_0)}\\
    &&\leq C\|\wid v_{1,j}-\wid u_{2,j}+\chi_2\wid u_{2,j}\|_{H^1(D_0\setminus\ov B_0)}\\
    &&\leq C(\|\wid v_{1,j}\|_{H^1(D_0\setminus\ov B_1)}+\|\wid u_{2,j}^s\|_{H^1(D_0)}+\|\wid u^i_j\|_{H^1(D_0\setminus\ov B_0)})\\
    &&\leq C,
  \enn
  since $\chi_2\in H^2(D_0)$. Next we show $\Delta u_0^j$ are uniformly bounded in $L^2(D_0)$. It can be verified that
  \ben
    \Delta u_0^j=&&\;\Delta(1-\chi_1)(\wid v_{1,j}-\wid u_{2,j}+\chi_2\wid u_{2,j})+2\nabla(1-\chi_1)\cdot\nabla(\wid v_{1,j}-\wid u_{2,j}+\chi_2\wid u_{2,j})\\
    &&\;+(1-\chi_1)\Delta(\wid v_{1,j}-\wid u_{2,j}+\chi_2\wid u_{2,j}). 
  \enn
  Thus we only need to prove that the last term in above equation is uniformly bounded in $L^2(D_0)$. Since $\wid v_{1,j}$ and $\wid u_{2,j}$ are solutions to problem (\ref{4.7}), we have
  \ben
    \Delta(\wid v_{1,j}-\wid u_{2,j}+\chi_2\wid u_{2,j})\;&&=-k^2n_1\wid v_{1,j}-k^2\wid u_{2,j}+\Delta\chi_2\wid u_{2,j}+2\nabla\chi_2\cdot\nabla \wid u_{2,j}-k^2\chi_2\wid u_{2,j},
  \enn
  which clearly implies $\|(1-\chi_1)\Delta(\wid v_{1,j}-\wid u_{2,j}+\chi_2\wid u_{2,j})\|_{L^2(D_0)}\leq C$ uniformly for $j\in\N$.

  Therefore, we obtain that $\|\wid u_{2,j}\|_{L^2(D_0)}\leq C$, which indicates that
  \ben
    \|\wid u^i_j\|_{L^2(D_0)}\leq \|\wid u_{2,j}\|_{L^2(D_0)}+\|\wid u_{2,j}^s\|_{L^2(D_0)}\leq C.
  \enn
  But by direct calculation we see that
  \ben
    \|\wid u^i_j\|_{L^2(D_0)}^2=\int_{D_0}|\nabla_x\Phi(\cdot,x_j)\cdot\nu(x_0)|^2dx\geq\frac{C}{j^2}\int_{D_0}\frac{1}{|x-x_j|^6}dx=O(j),
  \enn
  which is a contradiction. Hence, we get $D_1=D_2=D$.

  For the proof of the uniqueness of $\gamma$ on $\pa D$,  note that Theorem \ref{thm2.8} is still valid for the case $\lambda=1$ near the boundary $\pa D$, we then follow closely the proof of $\displaystyle\left(\pa\lambda_1/\pa\nu-\pa\lambda_2/\pa\nu\right)+2(\gamma_2-\gamma_1)=0$ on $\pa D$ in (i). The detailed proof is thus omitted here.

  Now assume that $n_1-n_2\neq0$ on $\pa D$.  We let $n_1-n_2>0$ in $\ov D_0$.  It follows that $(\wid v_{1,j},\wid v_{2,j})$ satisfies
  \be\label{4.8}
  \left\{
  \begin{array}{ll}
  	\Delta\wid v_{1,j}+k^2n_1\wid v_{1,j}=0~~~&{\rm in}~D_0,\\
  	\Delta\wid v_{2,j}+k^2n_2\wid v_{2,j}=0~~~&{\rm in}~D_0,\\
  	\wid v_{1,j}-\wid v_{2,j}=0~~~&{\rm on}~\Gamma,\\
  	\pa_\nu\wid v_{1,j}-\pa_\nu\wid v_{2,j}=0~~~&{\rm on}~\Gamma.
  \end{array}
  \right.
  \en
  By Remark \ref{remark3.4}, we can further let $D_0$ be small enough such that $(k,n_1,n_2,0,D_0)$ satisfies the condition 6 in Theorem \ref{thm3.3}. Then estimating analogously as the first part of the proof in (ii), we can also derive a contradiction, which implies that $n_1=n_2$ on $\pa D$.

  Furthermore, from (\ref{4.8}) we notice that 
   \ben
  \left\{
  \begin{array}{ll}
  	\Delta (\wid v_{1,j}-\wid v_{2,j})+k^2n_1(\wid v_{1,j}-\wid v_{2,j})=k^2(n_2-n_1)\wid v_{2,j}~~~&{\rm in}~D_0,\\ 
  	\wid v_{1,j}-\wid v_{2,j}=0~~~&{\rm on}~\Gamma.
  \end{array}
  \right.
  \enn
  By Theorem \ref{thm2.7}, since $n_1=n_2$ on $\Gamma$, we see
  \ben
    \|(n_2-n_1)\wid v_{2,j}\|_{L^2(D_0)}&&\leq C\left(\|\wid v_{2,j}-\wid u^i_j+(\gamma(x_0)/2)\Phi_0(x,x_j)\|_{L^2(D_0)}\right.\\
    &&\quad\quad~+\left.\|(n_2-n_1)(\wid u^i_j-(\gamma(x_0)/2)\Phi_0(x,x_j))\|_{L^2(D_0)}\right)\\
    &&\leq C.
  \enn
  It is known that $\wid v_{1,j},\wid v_{2,j}\in H^2(D_0)$ for every $j\in\N$ since $x_j\in\R^3\setminus\ov D$. Thus, we deduce from \cite[Theorem 9.13]{DN98} that for any domain $\Om'\subset\subset D_0\cup\Gamma$,
  \be\label{4.9}
    \|\wid v_{1,j}-\wid v_{2,j}\|_{H^2(\Om')}\leq C(\|\wid v_{1,j}-\wid v_{2,j}\|_{L^2(D_0)}+\|(n_2-n_1)\wid v_{2,j}\|_{L^2(D_0)})\leq C.
   \en
  We choose $\Om'$ such that $D\cap B\subset\Om'$ and $\Gamma':=\pa\Om'\cap\pa D_0$ is an open subset of $\Gamma$. For simplicity, set $\pa_i:=\pa/\pa x_i$ with $i=1,2,3$. Then for $i=1,2,3$ and $j\in\N$, it follows that 
  \be\label{4.10}
  \left\{
  \begin{array}{ll}
  	\Delta\pa_i\wid v_{1,j}+k^2\pa_in_1\wid v_{1,j}+k^2n_1\pa_i\wid v_{1,j}=0~~~&{\rm in}~\Om',\\ 
  	\Delta\pa_i\wid v_{2,j}+k^2\pa_in_2\wid v_{2,j}+k^2n_2\pa_i\wid v_{2,j}=0~~~&{\rm in}~\Om',\\ 
  	\pa_i\wid v_{1,j}-\pa_i\wid v_{2,j}=0~~~&{\rm on}~\Gamma'.
  \end{array}
  \right.
  \en

  Now we integrate by parts over $\Om'$ and yield that
  \ben
    &&\int_{\pa\Om'}\left(\pa_\nu(\pa_i\wid v_{1,j})\wid v_{2,j}-\pa_i\wid v_{1,j}\pa_\nu\wid v_{2,j}\right)ds=\int_{\Om'}k^2\left(n_2\wid v_{2,j}\pa_i\wid v_{1,j}-\pa_i(n_1\wid v_{1,j})\wid v_{2,j}\right)dx,\\
    &&\int_{\pa\Om'}\left(\pa_\nu(\pa_i\wid v_{2,j})\wid v_{1,j}-\pa_i\wid v_{2,j}\pa_\nu\wid  v_{1,j}\right)ds=\int_{\Om'}k^2\left(n_1\wid v_{1,j}\pa_i\wid v_{2,j}-\pa_i(n_2\wid v_{2,j})\wid v_{1,j}\right)dx,
  \enn
  which implies
  \be\label{4.11}\nonumber
    &&\;\int_{\pa\Om'}\left(\pa_{\nu}(\pa_i\wid v_{1,j})\wid v_{2,j}-\pa_\nu(\pa_i\wid v_{2,j})\wid v_{1,j}\right)ds~\\ \nonumber
    =&&\;\int_{\Om'}k^2\pa_i\left((n_2-n_1)\wid v_{1,j}\wid v_{2,j}\right)dx+\int_{\pa\Om'\setminus\Gamma'}\left(\pa_i\wid v_{1,j}\pa_\nu\wid  v_{2,j}-\pa_i\wid v_{2,j}\pa_\nu\wid v_{1,j}\right)ds~\\
    =&&\;\int_{\pa\Om'\setminus\Gamma'}k^2(n_2-n_1)\wid v_{1,j}\wid v_{2,j}\nu_ids+\int_{\pa\Om'\setminus\Gamma'}\left(\pa_i\wid v_{1,j}\pa_\nu\wid v_{2,j}-\pa_i\wid v_{2,j}\pa_\nu\wid v_{1,j}\right)ds~
  \en
  since $n_1=n_2$ on $\Gamma'$. On the other hand, we have that
  \be\label{4.12}\nonumber
    &&\;\int_{\Gamma'}\left(\pa_{\nu}(\pa_i\wid v_{1,j})-\pa_\nu(\pa_i\wid v_{2,j})\right)\wid v_{1,j}ds\\ \nonumber
    =&&\;\int_{\pa\Om'}\left(\pa_{\nu}(\pa_i\wid v_{1,j})-\pa_\nu(\pa_i\wid v_{2,j})\right)\wid v_{1,j}ds-\int_{\pa\Om'\setminus\Gamma'}\left(\pa_{\nu}(\pa_i\wid v_{1,j})-\pa_\nu(\pa_i\wid v_{2,j})\right)\wid v_{1,j}ds\\ \nonumber
    =&&\;\int_{\pa\Om'\setminus\Gamma'}(\pa_i\wid v_{1,j}-\pa_i\wid v_{2,j})\pa_\nu\wid  v_{1,j}-\int_{\pa\Om'\setminus\Gamma'}\left(\pa_{\nu}(\pa_i\wid v_{1,j})-\pa_\nu(\pa_i\wid v_{2,j})\right)\wid v_{1,j}ds\\
    &&\;+\int_{\Om'}k^2\pa_i[(n_2-n_1)\wid v_{2,j}]\wid v_{1,j}dx+\int_{\Om'}k^2\pa_in_1(\wid v_{2,j}-\wid v_{1,j})\wid v_{1,j}dx.
  \en
  Again integrating by parts, we obtain that
  \be\label{4.13}\nonumber
    &&\;\int_{\Om'}k^2\pa_i[(n_2-n_1)\wid v_{2,j}]\wid v_{1,j}dx\\ \nonumber
    =&&\;\int_{\pa\Om'\setminus\Gamma'}k^2(n_2-n_1)\wid v_{1,j}\wid v_{2,j}\nu_ids+\int_{\Om'}k^2(n_1-n_2)\wid v_{2,j}\pa_i\wid v_{1,j}dx\\ \nonumber
    =&&\;\int_{\pa\Om'\setminus\Gamma'}k^2(n_2-n_1)\wid v_{1,j}\wid v_{2,j}\nu_ids+\int_{\Om'}k^2(n_1-n_2)(\wid v_{2,j}-\wid v_{1,j})\pa_i\wid v_{1,j}dx\\ \nonumber
    &&\;+\frac{1}{2}\int_{\Om'}k^2(n_1-n_2)\pa_i(\wid v_{1,j}^2)dx\\ \nonumber
    =&&\;\int_{\pa\Om'\setminus\Gamma'}k^2(n_2-n_1)\wid v_{1,j}\wid v_{2,j}\nu_ids+\int_{\Om'}k^2(n_1-n_2)(\wid v_{2,j}-\wid v_{1,j})\pa_i\wid v_{1,j}dx\\
    &&\;+\frac{1}{2}\int_{\pa\Om'\setminus\Gamma'}k^2(n_1-n_2)\wid v_{1,j}^2\nu_ids+\frac{1}{2}\int_{\Om'}k^2(\pa_in_2-\pa_in_1)\wid v_{1,j}^2dx.
  \en
  Combinbing (\ref{4.11})-(\ref{4.13}), we finally derive the following equation
  \be\label{4.14}\nonumber
    &&\;\int_{\Om'}k^2(\pa_in_2-\pa_in_1)\wid v_{1,j}^2dx\\ \nonumber
    =&&\;\int_{\pa\Om'\setminus\Gamma'}k^2(n_2-n_1)\wid v_{1,j}^2\nu_ids+2\int_{\Om'}k^2\pa_in_1(\wid v_{1,j}-\wid v_{2,j})\wid v_{1,j}dx\\ \nonumber
    &&\;+2\int_{\Om'}k^2(n_2-n_1)(\wid v_{2,j}-\wid v_{1,j})\pa_i\wid v_{1,j}dx+2\int_{\pa\Om'\setminus\Gamma'}\pa_\nu(\pa_i\wid v_{1,j})(\wid v_{1,j}-\wid v_{2,j})ds\\
    &&\;+2\int_{\pa\Om'\setminus\Gamma'}\pa_i\wid v_{1,j}(\pa_\nu\wid v_{2,j}-\pa_\nu\wid v_{1,j})ds 
  \en
  for $i=1,2,3$. By \cite[Lemma 1.1]{AG02}, Theorem \ref{thm2.7} and (\ref{4.9}), it is deduced that
  \ben
    \left|\int_{\pa\Om'\setminus\Gamma'}\pa_\nu(\pa_i\wid v_{1,j})(\wid v_{1,j}-\wid v_{2,j})ds\right|\;&&\leq \|\pa_\nu(\pa_i\wid v_{1,j})\|_{H^{-3/2}(\pa\Om'\setminus\Gamma')}\|\wid v_{1,j}-\wid v_{2,j}\|_{H^2(\Om')}\\
    &&\leq C\|\pa_i\wid v_{1,j}\|_{H_\Delta(\Om'\setminus\ov B)}\|\wid v_{1,j}-\wid v_{2,j}\|_{H^2(\Om')}\\ 
    &&\leq C\|\wid v_{1,j}\|_{H^1(\Om'\setminus\ov B)}\|\wid v_{1,j}-\wid v_{2,j}\|_{H^2(\Om')}\leq C
  \enn
  and
  \ben
    \left|\int_{\pa\Om'\setminus\Gamma'}\pa_i\wid v_{1,j}(\pa_\nu\wid v_{2,j}-\pa_\nu\wid v_{1,j})\right|\;&&\leq \|\pa_i\wid v_{1,j}\|_{H^{-1/2}(\pa\Om'\setminus\Gamma')}\|\wid v_{1,j}-\wid v_{2,j}\|_{H^2(\Om')}\\
    &&\leq C\|\pa_i\wid v_{1,j}\|_{H_\Delta(\Om'\setminus\ov B)}\|\wid v_{1,j}-\wid v_{2,j}\|_{H^2(\Om')}\leq C.
  \enn
  We can further get that the remained terms in the right hand side of (\ref{4.14}) are uniformly bounded for $j\in\N$ again from Theorem \ref{thm2.7} and (\ref{4.9}), which implies
  \ben
    \left|\int_{\Om'}k^2(\pa_in_2-\pa_in_1)v_{1,j}^2dx\right|\leq C
  \enn
  and thus
  \be\label{4.15}
    \left|\int_{\Om'}(\pa_in_2-\pa_in_1)|\na_x\Phi_0(x,x_j)\cdot\nu(x_0)|^2dx\right|\leq C.
  \en
  But the singularity of $\Phi_0$ yields that
  \ben
    &&\int_{\Om'}|\na_x\Phi_0(x,x_j)\cdot\nu(x_0)|^2dx\geq Cj, \\
    &&\int_{\Om'}|x-x_0||\na_x\Phi_0(x,x_j)\cdot\nu(x_0)|^2dx\geq C\ln j
  \enn
  as $j\rightarrow\infty$ with the constant $C>0$ independent of $j\in\N$. Therefore, the regularity of $n$ (i.e., $n_m\in C^2(\ov D\setminus D_b^m)$, $m=1,2$) toghther with (\ref{4.15}) forces $D^\alpha n_1=D^\alpha n_2$ on $\pa D$ for $|\alpha|=1,2$. 
  
  (iii)We continue the proof of part (i) for $a_1=a_2$ on $\pa D$. Since $\left(\pa_\nu\lambda_1-\pa_\nu\lambda_2\right)+2(\gamma_2-\gamma_1)=0$ on $\pa D$, we see that (\ref{3.4a}) becomes
  \be\label{4.16}
  \left\{
  \begin{array}{ll}
  	\Delta w_{1,j}-a_1w_{1,j}=0~~~&{\rm in}~D_0,\\
  	\Delta w_{2,j}-a_2w_{2,j}=0~~~&{\rm in}~D_0,\\
  	w_{1,j}-w_{2,j}=0~~~&{\rm on}~\Gamma,\\
  	\pa_\nu w_{1,j}-\pa_\nu w_{2,j}=0~~~&{\rm on}~\Gamma.
  \end{array}
  \right.
  \en
  Similar to (\ref{4.9}), by Theorem \ref{thm2.8} and $\lambda_1=\lambda_2$ on $\pa D$ we can also get $\|w_{1,j}-w_{2,j}\|_{H^2(\Om')}\leq C$. Following the similar procedure from (\ref{4.11}) to (\ref{4.14}), we have
  \be\label{4.17}\nonumber
    &&\;\int_{\pa\Om'}(a_1-a_2)w_{1,j}^2\nu_ids\\ \nonumber
    =&&\;\int_{\Om'}(\pa_ia_1-\pa_ia_2)w_{1,j}^2dx+2\int_{\Om'}\pa_ia_1(w_{1,j}-w_{2,j})w_{1,j}dx\\ \nonumber
    &&\;+2\int_{\Om'}(a_2-a_1)(w_{2,j}-w_{1,j})\pa_iw_{1,j}dx+2\int_{\pa\Om'\setminus\Gamma'}\pa_\nu(\pa_iw_{1,j})(w_{1,j}-w_{2,j})ds \\
    &&\;+2\int_{\pa\Om'\setminus\Gamma'}\pa_iw_{1,j}(\pa_\nu w_{2,j}-\pa_\nu w_{1,j})ds
  \en
  with $i=1,2,3$. Again we can derive that
  \ben
    \left|\int_{\pa\Om'}(a_1-a_2)w_{1,j}^2\nu_ids\right|\leq C,
  \enn
  which indicates by Theorem \ref{thm2.8} that
  \ben
    \left|\int_{\Gamma'}(a_1-a_2)\lambda_1|\Phi_0(x,x_j)|^2\nu_ids\right|\leq C.
  \enn
  Now suppose $(a_1-a_2)(x_0)\neq 0$ for $x_0\in\pa D$. Then we can assume that $a_1-a_2>0$ in $\ov \Om'$ (or the real or imaginary part of $a_1-a_2$).  We know that there exists $i\in\{1,2,3\}$ such that $\nu_i(x_0)\neq 0$. Hence, we can further assume that $\nu_i\geq\varepsilon>0$ on $\Gamma'$, which implies $\|\Phi_0(x,x_j)\|_{L^2(\Gamma')}\leq C$. This is however a contradiction. Therefore, $a_1=a_2$ on $\pa D$, and the proof is finally finished.
\end{proof}
\begin{remark}\label{remark3.8}
  Note that the regularity requirements on the coefficients in this theorem can be relaxed to only in the neighborhood of the boundary $\pa D$. Thus if $\lambda$ is a constant near the boundary $\pa D$, we immediately get the uniqueness of $\gamma$ and $n$ at the boundary. Furthermore, if $\lambda\equiv1$ and $n$ is a polynomial of degree {\rm 2}, then by the standard arguments we can recover $D_b$ and $n$ simultaneously. We also note that there is a possibility to extend our methods to recover $D_b$ and analytical $n$. 
\end{remark}

\section*{Acknowledgements} This work is partly supported by NSF of China under No. 12122114.

\end{document}